\DeclareMathOperator{\esssup}{ess\,sup}
\theoremstyle{plain}
\newtheorem{theorem}{Theorem}[section]
\newtheorem{lemma}[theorem]{Lemma}
\theoremstyle{definition}
\newtheorem{definition}[theorem]{Definition}
\theoremstyle{remark}
\newtheorem{remark}[theorem]{Remark}
\newcommand{\R}{\mathbb{R}}
\newcommand{\N}{\mathbb{N}}
\newcommand{\p}{\partial}
\newcommand{\bu}{\boldsymbol{u}}
\newcommand{\bx}{\boldsymbol{x}}
\newcommand{\bv}{\boldsymbol{v}}
\newcommand{\bw}{\boldsymbol{w}}
\newcommand{\bphi}{\boldsymbol{\varphi}}
\newcommand{\bF}{\boldsymbol{F}}
\newcommand{\bg}{\boldsymbol{g}}
\newcommand{\bPhi}{\boldsymbol{\phi}}
\newcommand{\bpsi}{\boldsymbol{\psi}}
\newcommand{\ltwo}[1]{{L^2\left(#1\right)}}
\newcommand{\lfour}[1]{{L^4\left(#1\right)}}
\newcommand{\linf}[1]{{L^\infty\left(#1\right)}}
\newcommand{\hdot}[1]{{\dot H^1\left(#1\right)}}
\newcommand{\hdotzig}[1]{{\dot H^1_{0,\sigma}\left(#1\right)}}
\newcommand{\upper}{{\R^2_+}}
\newcommand{\balln}{{B^+_n}}
\newcommand{\ballm}{{B^+_m}}
\newcommand{\com}[1]{{C^\infty_c\left(#1\right)}}
\newcommand{\comsig}[1]{{C^\infty_{c,\sigma}\left(#1\right)}}
\def\XXint#1#2#3{{\setbox0=\hbox{$#1{#2#3}{\int}$ }
\vcenter{\hbox{$#2#3$ }}\kern-.6\wd0}}
\title[N-S equations in the upper-half plane]{On stationary Navier-Stokes equations in the upper-half plane}
\author[A. Calderon]{Adrian D. Calderon}
\address[A. Calderon]{Department of Department of Mathematics and Statistics, Boston University, 665 Commonwealth Ave,
Boston, MA 02215}
\email{acaldero@bu.edu}
\author[V. Le]{Van Le}
\address[V. Le]{Department of Mathematics, University of Tennessee, 227 Ayres Hall,
1403 Circle Drive, Knoxville, TN 37996-1320 }
\email{nle12@vols.utk.edu}
\author[T. Phan]{Tuoc Phan}
\address[T. Phan]{Department of Mathematics, University of Tennessee, 227 Ayres Hall,
1403 Circle Drive, Knoxville, TN 37996-1320}
\email{phan@utk.edu}
\subjclass[2020]{76D05, 76D03, 35Q30}
\keywords{Stationary Navier-Stokes equations, Existence, Uniqueness}
\begin{document}

\begin{abstract} We study  the incompressible stationary Navier-Stokes equations in the upper-half plane with homogeneous Dirichlet boundary condition and non-zero external forcing terms. Existence of weak solutions is proved under a suitable condition on the external forces. Weak-strong uniqueness criteria based on  various growth conditions at the infinity of weak solutions are also given. This is done by employing an energy estimate and a Hardy's inequality. Several estimates of stream functions are carried out and two density lemmas with suitable weights for the homogeneous Sobolev space on 2-dimensional space are proved.
\end{abstract}
\maketitle
\section{Introduction and main results}

Let $\upper:=\left\{ \bx = (x_1, x_2) \in\R^2:x_2>0\right\}$ be the upper-half plane. We consider the following stationary Navier-Stokes equations of the incompressible fluid in $\upper$ 
\begin{equation} \label{1.1}
\left\{
\begin{array}{cccl}
   - \boldsymbol\Delta\bu + (\bu\cdot\nabla) \bu + \nabla P  + \text{div}(\bF) & = &  0 & \quad \text{in} \quad \upper\\
    \nabla\cdot\bu & = & 0 & \quad \text{in} \quad \upper\\
   \qquad \bu  & = & 0 & \quad \text{on} \quad \p\upper,
   \end{array} \right.
 \end{equation}
where $\bu=(u_1,u_2):\upper\rightarrow\R^2$ is the unknown velocity of the fluid, $P$ is the unknown fluid pressure, $\bF: \upper\rightarrow\R^{2\times 2}$ is a given forcing term. We are interested in studying the existence and uniqueness of weak solutions to \eqref{1.1}. One of the reasons for our interest in studying the problem in $\R^2_+$ is based on the unknown asymptotic behavior as $|\bx| \rightarrow \infty$ of weak solutions in the homogeneous Sobolev space $\hdotzig{\R^2_+}$ to \eqref{1.1}. See the paragraph after Theorem \ref{theorem2.1} below for more discussion.

\smallskip
To introduce our main results, let us provide several definitions and  notations used in the paper. For a given open, non-empty set  $\mathcal{D} \subseteq \R^2$, we denote $C^\infty_c(\mathcal{D})$ the set of all compactly supported smooth functions on $\mathcal{D}$, and
\begin{align*}
    \comsig{\mathcal{D}} =\{\bphi\in\com{\mathcal{D}}^2:\nabla\cdot\bphi=0\}.
\end{align*} 
The homogeneous Sobolev space $\hdot{\mathcal{D}}$ is defined by
\begin{align*}
    \hdot{\mathcal{D}} =\{u\in L^1_{\textup{loc}}(\mathcal{D}):\nabla u\in\ltwo{\mathcal{D}}\},
\end{align*}
and it is endowed with the semi-norm 
\[
    \|u\|_\hdot{\mathcal{D}}=\|\nabla u\|_\ltwo{\mathcal{D}}.
\]
Next, we denote
\[ 
\hdotzig{\mathcal{D}}\  \textnormal{the completion of } \comsig{\mathcal{D}}\textnormal{ in  }\hdot{\mathcal{D}} \times \hdot{\mathcal{D}}.
\]
Also, recall that for a given weight $\mu$ defined on $\mathcal{D}$, and for $q \in [1, \infty]$, the weighted Lebesgue space $L^q(\mathcal{D},\mu)$ is defined as
\[
L^q(\mathcal{D},\mu) = \{f : \mathcal{D} \rightarrow \R: \|f\|_{L^q(\mathcal{D}, \mu)} <\infty \}
\]
where
\[
\|f\|_{L^q(\mathcal{D},\mu)} = \left \{
\begin{array}{ll}
\displaystyle{\Big (\int_{\mathcal{D}} |f(\bx)|^q \mu(\bx) d\bx }\Big)^{1/q} & \quad \text{if} \quad q < \infty,\\
\esssup_{\mathcal{D}}|f(\bx) \mu(\bx)| & \quad \text{if} \quad q =\infty.
\end{array} \right.
\]
We also recall the definition of weak solutions to \eqref{1.1}. 
\begin{definition} Let $\bF=(\bF_1,\bF_2) \in\ltwo{\R^2_+}^{2\times2}$, with $\bF_1 = (F_{11}, F_{12})$ and $\bF_2 = (F_{21}, F_{22})$. A vector field $\bu=(u_1,u_2)\in\hdotzig{\R^2_+}$ is said to be a weak solution of \eqref{1.1} 
if
\begin{equation} \label{1.2}
    \sum_{i=1}^2\int_{\R_+^2}\big(\nabla u_i\cdot\nabla\varphi_i+(\bu\cdot\nabla)u_i\varphi_i\big)\,d\bx= \sum_{i=1}^2\int_{\R_+^2}\bF_i\cdot\nabla\varphi_i\,d\bx,
\end{equation}
for all $\bphi=(\varphi_1,\varphi_2)\in C^\infty_{c\\
	,\sigma}(\R^2_+)$.
\end{definition}
Throughout the paper, we denote the following weights
\begin{equation} \label{weight-def}
\omega_0(\bx) =\frac{|x_2|^2 |\bx|}{|x_2| + 4|\bx|},  \quad \omega_1(\bx) = |x_2| |\bx|, \quad \text{and} \quad \omega_2(\bx) = |x_2|^2/4,
\end{equation}
for $\bx= (x_1, x_2) \in \R^2_+$. Observe that
\begin{equation} \label{weight-relation}
\frac{1}{\omega_1(\bx)} + \frac{1}{\omega_2(\bx)} = \frac{1}{\omega_0(\bx)}.
\end{equation}
We point out that  $\|\cdot \|_\hdot{\R^2_+}$ is a norm due to the Hardy inequality
\begin{equation} \label{hardy-1}
\|u\|_{L^2(\R^2_+, \omega^{-1})} \leq 4 \|\nabla u\|_{L^2(\R^2_+)}, \quad \forall u \in C_c^\infty({\R^2_+})
\end{equation}
for $\omega \in \{\omega_0, \omega_1, \omega_2\}$, see Lemma \ref{Hardy} below.

The first result of the paper is the existence of weak solutions $\bu = (u_1, u_2) \in\hdotzig{\upper} \cap L^2(\R^2_+, \omega^{-1})$ to the stationary Navier-Stokes equations \eqref{1.1} on the upper-half plane $\upper$ with $\omega \in \{\omega_0, \omega_1, \omega_2\}$. 
\begin{theorem} \label{theorem2.1} For every $\bF\in L^2(\R^2_+)^{2\times2}$, there exists a weak solution $\bu = (u_1, u_2) \in\hdotzig{\upper}$ to the stationary Navier-Stokes equations \eqref{1.1} satisfying
\begin{equation} \label{2.1}
    \|\nabla\bu\|_{L^2(\R^2_+)}^2 \leq \langle\nabla\bu, \bF\rangle_\ltwo{\R^2_+} 
\end{equation}
and
\begin{equation} \label{est-0321}
\|\nabla\bu\|_{L^2(\R^2_+)} \leq \|\bF\|_{L^2(\R^2_+)}, \quad \|\bu\|_{L^2(\R^2_+, \omega^{-1})} \leq 4\|\bF\|_{L^2(\R^2_+)},
\end{equation}
for any $\omega \in \{\omega_0, \omega_1, \omega_2\}$. 
\end{theorem}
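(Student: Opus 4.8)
The plan is to construct the solution by a Galerkin approximation, relying on the Hardy inequality \eqref{hardy-1} to convert the gradient bound into local compactness in the unbounded domain. Since $\comsig{\R^2_+}$ is dense in $\hdotzig{\R^2_+}$ by definition and the latter is a separable Hilbert space under the inner product $\langle \nabla\bu,\nabla\bv\rangle_{\ltwo{\R^2_+}}$, I would first fix a sequence $\{\bw_k\}_{k\ge 1}\subset\comsig{\R^2_+}$ that is orthonormal in this inner product and whose finite linear combinations are dense in $\hdotzig{\R^2_+}$; applying Gram--Schmidt to a countable dense subset preserves compact support and divergence-freeness. For each $n$ I seek $\bu^n=\sum_{k=1}^n c_k^n\bw_k$ solving the weak formulation \eqref{1.2} tested against each $\bw_j$, $1\le j\le n$.

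The resulting finite-dimensional nonlinear system is solved by Brouwer's fixed point theorem. The decisive structural fact is that the trilinear term vanishes on the diagonal: testing against $\bu^n$ itself and integrating by parts using $\nabla\cdot\bu^n=0$ gives $\sum_i\int_{\R^2_+}(\bu^n\cdot\nabla)u_i^n\,u_i^n\,d\bx=0$. Writing the system as $\Phi_n(c)=0$ for a continuous map $\Phi_n:\R^n\to\R^n$, orthonormality yields $\langle\Phi_n(c),c\rangle=\|\nabla\bu^n\|_{\ltwo{\R^2_+}}^2-\langle\nabla\bu^n,\bF\rangle\ge |c|\,(|c|-\|\bF\|_{\ltwo{\R^2_+}})$, which is strictly positive on the sphere $|c|=\|\bF\|_{\ltwo{\R^2_+}}+1$. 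A standard corollary of Brouwer's theorem then produces a zero $c^n$; testing the resulting equation against $\bu^n$ recovers the identity $\|\nabla\bu^n\|_{\ltwo{\R^2_+}}^2=\langle\nabla\bu^n,\bF\rangle$, whence $\|\nabla\bu^n\|_{\ltwo{\R^2_+}}\le\|\bF\|_{\ltwo{\R^2_+}}$. Thus $\{\bu^n\}$ is bounded in $\hdotzig{\R^2_+}$, and along a subsequence $\bu^n\rightharpoonup\bu$ weakly there.

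The main obstacle is passing to the limit in the nonlinear term, since the domain is unbounded and $\hdotzig{\R^2_+}$ controls only gradients; this is exactly where \eqref{hardy-1} is used. Fix a test function $\bphi\in\comsig{\R^2_+}$, whose support is a compact set $K\subset\subset\R^2_+$ bounded away from $\p\upper$, so that on $K$ the weight $\omega$ is bounded above and below by positive constants. Hardy's inequality then gives $\|\bu^n\|_{\ltwo{K}}\le C_K\|\bu^n\|_{L^2(\R^2_+,\omega^{-1})}\le 4C_K\|\nabla\bu^n\|_{\ltwo{\R^2_+}}$, so $\{\bu^n\}$ is bounded in $H^1(K)$; since $H^1(K)\hookrightarrow\hookrightarrow\ltwo{K}$ and the weak $H^1(K)$-limit is $\bu$, the compact embedding upgrades this to strong $\ltwo{K}$-convergence $\bu^n\to\bu$ along the chosen subsequence. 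Integrating the trilinear term by parts to move the derivative onto $\bphi$ (the divergence term dropping out), the integrand becomes a product of two $\ltwo{K}$-strongly convergent factors against the bounded $\nabla\bphi$, which passes to the limit; the remaining linear terms pass by weak convergence. Hence $\bu$ satisfies \eqref{1.2} and is a weak solution.

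Finally, the estimates \eqref{2.1} and \eqref{est-0321} follow without testing with $\bu$ directly, which avoids having to justify the trilinear form on $\bu\in\hdotzig{\R^2_+}$. From the identity $\|\nabla\bu^n\|_{\ltwo{\R^2_+}}^2=\langle\nabla\bu^n,\bF\rangle$ and $\bu^n\rightharpoonup\bu$, the right side converges to $\langle\nabla\bu,\bF\rangle$, while weak lower semicontinuity of the norm gives $\|\nabla\bu\|_{\ltwo{\R^2_+}}^2\le\liminf_n\|\nabla\bu^n\|_{\ltwo{\R^2_+}}^2=\langle\nabla\bu,\bF\rangle$, which is \eqref{2.1}; Cauchy--Schwarz then yields $\|\nabla\bu\|_{\ltwo{\R^2_+}}\le\|\bF\|_{\ltwo{\R^2_+}}$. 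The weighted bound in \eqref{est-0321} is immediate by applying \eqref{hardy-1} to $\bu\in\hdotzig{\R^2_+}$, valid by density of $\comsig{\R^2_+}$, and inserting the gradient bound just obtained.
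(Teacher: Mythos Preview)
Your argument is correct, but it follows a different classical route than the paper. The paper uses Leray's \emph{invading domains} method: it first solves \eqref{1.1} on each half-ball $B_n^+$ via the Leray--Schauder fixed point theorem (Lemma~\ref{theorem4.1}), obtaining the energy \emph{equality} on $B_n^+$; it then extends $\bu_n$ by zero, extracts a weak limit in $\hdotzig{\upper}$, and passes to the limit in the nonlinear term using the compact embedding $H^1(B_m^+)\hookrightarrow L^4(B_m^+)$ for a fixed ball containing $\textup{supp}\,\bphi$. You instead run a Galerkin scheme directly in $\hdotzig{\upper}$ with a compactly supported divergence-free basis, solve the finite-dimensional system by Brouwer, and pass to the limit using only the $L^2$ Rellich embedding on $K=\textup{supp}\,\bphi$ after shifting the derivative onto $\bphi$; since $\nabla\bphi\in L^\infty$, strong $L^2(K)$ convergence of $\bu^n$ suffices for the quadratic term. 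Both approaches yield \eqref{2.1} in the same way---weak lower semicontinuity against the exact energy identity at the approximate level---and then \eqref{est-0321} via Lemma~\ref{Hardy}. The invading-domain approach has the advantage that the bounded-domain existence is a modular ``black box'' one can cite, while your Galerkin approach is slightly more self-contained (Brouwer rather than Leray--Schauder) and never leaves the single space $\hdotzig{\upper}$.
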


We would like to point out that though the Hardy type inequality \eqref{hardy-1} is available, it is not sufficient  to characterize the behavior at infinity for functions in $\hdotzig{\upper}$. In fact, it is known that $|\bu(\bx)| \rightarrow 0$ as $|\bx| \rightarrow \infty$ in the sense defined in Remark \ref{zero-infity} below, for $\bu \in \hdotzig{\upper}$. However, the rate of the convergence is not clear. Moreover, note that there is no Sobolev embedding theorem from $\hdotzig{\upper}$ into a Lebesgue space as we are in an unbounded two dimensional domain.  As such, the uniqueness of weak solutions in $\hdotzig{\upper}$ to \eqref{1.1} is a challenging problem. Specifically, if $\bu, \bar{\bu} \in \hdotzig{\upper}$ are two weak solutions to \eqref{1.1}, then $\bw : = \bu -\bar{\bu} \in \hdotzig{\upper}$ is a weak solution to
\begin{equation} \label{diff-show}
\left\{
\begin{array}{cccl}
-\boldsymbol\Delta \bw + \bar{\bu} \cdot \nabla \bw + \bw \cdot \nabla \bu + \nabla \bar{P} & = & 0 & \quad \text{in} \quad \R^2_+, \\
\nabla \cdot \bw & = & 0 & \quad \text{in} \quad \R^2_+,\\
\bw & = & 0 & \quad \text{on} \quad \partial \R^2_+.
\end{array} \right.
\end{equation}
As $\nabla \bu \in L^2(\R^2_+)$ and not much is known on the integrability of $\bw$, it is not easy to control the term $\bw \cdot \nabla \bu$. Similar issue also appears in controlling the term $\bar{\bu} \cdot \nabla \bw$ when applying the integration by parts. As such, Liouville type theorem for solutions in $\hdotzig{\upper}$ to \eqref{diff-show} is not well understood. The same issue also appears when $\R^2_+$ is replaced by $\R^2$. Interested readers can find in \cite{Amick, Fin, GSohr, Lad, galdi, Naz, Smith} for  some classical results on the asymptotic behavior as $|\bx| \rightarrow \infty$, and on the uniqueness of solutions to the Navier-Stokes equations in various types of domains. Similarly, see also \cite{GKR, guillod, KP, G-W, Yamazaki} and the references therein for some recent developments.   

\smallskip
In this work, we impose certain extra assumptions on the behavior at infinity for solutions to prove the weak-strong uniqueness of solutions.  We denote  
\begin{equation} \label{X-def}
\mathcal{X}  = \Big\{\bv \in \hdotzig{\upper}:  \int_{0}^\infty x_2\| \bv(\cdot, x_2)\|_{L^\infty(\R)}^2 dx_2 <\infty \Big\}.
\end{equation}
Then, we have the following results on weak-strong uniqueness of the weak solutions obtained in Theorem \ref{theorem2.1}, which is also the main result of the paper.
\begin{theorem} \label{theorem2.2} Let $\bF\in\ltwo{\upper}^{2\times2}$ and assume that there is a weak solution $\bar{\bu} \in \hdotzig{\upper}$ to the  Navier-Stokes equations \eqref{1.1} which satisfies \eqref{2.1} and one of the following conditions
\begin{itemize}
\item[\textup{(i)}] There is $i =0,1,2$ so that $\bar{\bu} \in L^\infty(\R^2_+, \omega^{1/2}_i)$ and
\begin{equation} \label{2.2}
    \|\bar{\bu}\|_{L^\infty(\R^2_+, \omega_i^{1/2})}< 1/4.
\end{equation}
\item[\textup{(ii)}] The weak solution $\bar{\bu} \in \mathcal{X}$  and 
\begin{equation}  \label{2.2-2}
\left(\int_0^\infty x_2\|\bar{\bu}(\cdot, x_2)\|_{L^\infty(\R)}^2 dx_2  \right)^{1/2} < 1.
\end{equation}
\end{itemize}
Then for every weak solution $\bu \in \hdotzig{\upper}$ to \eqref{1.1} satisfying \eqref{2.1}, we have $\bu=\bar{\bu}$.
\end{theorem}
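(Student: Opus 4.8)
The plan is to run a weak--strong energy argument: set $\bw := \bu - \bar{\bu} \in \hdotzig{\upper}$ and use $\bw$ itself as a test function in the difference equation \eqref{diff-show}. First I would record its weak form. Since $\bu$ and $\bar{\bu}$ both satisfy \eqref{1.2} with the same $\bF$, the forcing cancels, and writing the nonlinearity as $(\bu\cdot\nabla)\bu - (\bar{\bu}\cdot\nabla)\bar{\bu} = (\bu\cdot\nabla)\bw + (\bw\cdot\nabla)\bar{\bu}$ gives
\begin{equation*}
\sum_{i=1}^2 \int_{\R^2_+}\big(\nabla w_i\cdot\nabla\varphi_i + (\bu\cdot\nabla)w_i\,\varphi_i + (\bw\cdot\nabla)\bar u_i\,\varphi_i\big)\,d\bx = 0
\end{equation*}
for all $\bphi\in\comsig{\R^2_+}$. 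I deliberately keep the factor $\bar{\bu}$ in the last term so that the smallness hypotheses \eqref{2.2} and \eqref{2.2-2} can be exploited.

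Next I would take $\bphi=\bw$. Formally, the pressure term drops since $\nabla\cdot\bw=0$, and $\int_{\R^2_+}(\bu\cdot\nabla)\bw\cdot\bw\,d\bx$ vanishes since $\nabla\cdot\bu=0$ (it equals $\tfrac12\int_{\R^2_+}\bu\cdot\nabla|\bw|^2\,d\bx$). What remains is
\begin{equation*}
\|\nabla\bw\|_{L^2(\R^2_+)}^2 = -\int_{\R^2_+}(\bw\cdot\nabla)\bar{\bu}\cdot\bw\,d\bx = \int_{\R^2_+}(\bw\cdot\nabla)\bw\cdot\bar{\bu}\,d\bx,
\end{equation*}
where the last equality is an integration by parts using $\nabla\cdot\bw=0$. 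This step is essential because we control $\bar{\bu}$ but not $\nabla\bar{\bu}$ pointwise. It thus suffices to bound $\int_{\R^2_+}|\bw|\,|\nabla\bw|\,|\bar{\bu}|\,d\bx$ by $C\,\|\nabla\bw\|_{L^2(\R^2_+)}^2$ with $C<1$.

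For case (i), I would use $|\bar{\bu}(\bx)|\le \|\bar{\bu}\|_{L^\infty(\R^2_+,\omega_i^{1/2})}\,\omega_i(\bx)^{-1/2}$, then Cauchy--Schwarz and the Hardy inequality \eqref{hardy-1} of Lemma \ref{Hardy} applied componentwise to $\bw$:
\begin{equation*}
\int_{\R^2_+}|\bw|\,|\nabla\bw|\,|\bar{\bu}|\,d\bx \le \|\bar{\bu}\|_{L^\infty(\R^2_+,\omega_i^{1/2})}\,\|\bw\|_{L^2(\R^2_+,\omega_i^{-1})}\,\|\nabla\bw\|_{L^2(\R^2_+)} \le 4\,\|\bar{\bu}\|_{L^\infty(\R^2_+,\omega_i^{1/2})}\,\|\nabla\bw\|_{L^2(\R^2_+)}^2,
\end{equation*}
so $C=4\|\bar{\bu}\|_{L^\infty(\R^2_+,\omega_i^{1/2})}<1$ by \eqref{2.2}. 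For case (ii), I would slice in $x_2$: bounding $|\bar{\bu}(\cdot,x_2)|$ by $\|\bar{\bu}(\cdot,x_2)\|_{L^\infty(\R)}$ and using Cauchy--Schwarz in $x_1$ gives $\int_{\R}|\bw||\nabla\bw|\,dx_1 \le \|\bw(\cdot,x_2)\|_{L^2(\R)}\|\nabla\bw(\cdot,x_2)\|_{L^2(\R)}$. Since $\bw$ vanishes on $\{x_2=0\}$, the one-dimensional bound $\|\bw(\cdot,x_2)\|_{L^2(\R)}\le x_2^{1/2}\,\|\nabla\bw\|_{L^2(\R^2_+)}$ (from $w(x_1,x_2)=\int_0^{x_2}\partial_{x_2}w(x_1,s)\,ds$ and Cauchy--Schwarz), followed by a final Cauchy--Schwarz in $x_2$, yields
\begin{equation*}
\int_{\R^2_+}|\bw|\,|\nabla\bw|\,|\bar{\bu}|\,d\bx \le \Big(\int_0^\infty x_2\|\bar{\bu}(\cdot,x_2)\|_{L^\infty(\R)}^2\,dx_2\Big)^{1/2}\|\nabla\bw\|_{L^2(\R^2_+)}^2,
\end{equation*}
so $C<1$ by \eqref{2.2-2}. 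In either case $(1-C)\|\nabla\bw\|_{L^2(\R^2_+)}^2\le 0$ forces $\nabla\bw=0$, hence $\bw=0$ because $\|\cdot\|_{\hdot{\R^2_+}}$ is a norm on $\hdotzig{\R^2_+}$; that is, $\bu=\bar{\bu}$.

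The main obstacle is the rigor of taking $\bphi=\bw$: the weak formulation holds a priori only for $\bphi\in\comsig{\R^2_+}$, while $\bw\in\hdotzig{\R^2_+}$ has limited a priori integrability. I would approximate $\bw$ by $\bphi^{(n)}\in\comsig{\R^2_+}$ with $\nabla\bphi^{(n)}\to\nabla\bw$ in $L^2$ and, via \eqref{hardy-1}, $\bphi^{(n)}\to\bw$ in $L^2(\R^2_+,\omega^{-1})$, and pass to the limit. The delicate point is the convergence of the trilinear terms and the vanishing of the would-be boundary contributions at infinity in the integration by parts; this is precisely where the decay encoded in the weighted norms (case (i)) or in the space $\mathcal{X}$ of \eqref{X-def} (case (ii)), together with the Hardy inequality, must be invoked to ensure the products are integrable and the limits reproduce the formal identities above. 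Establishing this admissibility — rather than the algebra of the estimates — is the heart of the argument.
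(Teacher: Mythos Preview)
Your proposal has a genuine gap, and it is precisely the one you flag at the end but do not resolve. When you approximate $\bw$ by $\bphi^{(n)}\in\comsig{\R^2_+}$ and try to pass to the limit in
\[
\int_{\R^2_+}(\bu\cdot\nabla)\bw\cdot\bphi^{(n)}\,d\bx,
\]
you need $\bu\otimes\bw\in L^2(\R^2_+)$ (equivalently, after writing $\bu=\bar{\bu}+\bw$, you need $|\bw|^2\in L^2$). In two dimensions there is no Sobolev embedding of $\hdotzig{\R^2_+}$ into any Lebesgue space, and the Hardy inequality only gives $\bw\in L^2(\R^2_+,\omega^{-1})$; this is not enough to control $\bw\otimes\bw$. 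So the formal identity $\int(\bu\cdot\nabla)\bw\cdot\bw=0$ cannot be justified, and your approximation argument for this term does not close. Notice also that your argument never uses the energy inequality \eqref{2.1}, which is a hypothesis of the theorem; that should be a warning sign.

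The paper's proof is engineered to avoid exactly this obstruction. Instead of testing the \emph{difference} equation with $\bw$, it tests the equation for $\bu$ with $\bar{\bu}$ (Lemma~\ref{lemma5.4}(ii)) and the equation for $\bar{\bu}$ with $\bu$ (Lemma~\ref{lemma5.3}). In both cases the trilinear term that survives always contains at least one factor of the ``good'' solution $\bar{\bu}$, so that $\bu\otimes\bar{\bu}\in L^2$ follows from Hardy and the assumed bound on $\bar{\bu}$, and the approximation lemmas (Lemmas~\ref{approximationlemma}--\ref{approximationlemma2}) make the limit rigorous. The remaining self-terms $\|\nabla\bu\|^2$ and $\|\nabla\bar{\bu}\|^2$ are handled not by testing with the solution itself (which would run into the same $L^4$ obstruction) but by invoking the energy inequality \eqref{2.1}. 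This gives only
\[
\|\nabla\bw\|_{L^2(\R^2_+)}^2 \le \langle\bw\cdot\nabla\bw,\bar{\bu}\rangle_{L^2(\R^2_+)}
\]
rather than an equality, but an inequality is all that is needed; from here your final estimates for cases (i) and (ii) are exactly right.
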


We would like to point out that (i) in Theorem \ref{theorem2.2} improves a result in \cite{G-W}. Precisely, in \cite[Theorem 6.1]{G-W}, the same weak-strong uniqueness assertion was proved under the assumption that $\bar{\bu} \in L^\infty(\R^2_+, \omega_2^{1/2})$, $|\nabla \bar{\bu}| \in L^\infty(\R^2_+, \omega_2)$,  and  $\|\bar{\bu}\|_{L^\infty(\R^2_+, \omega_2^{1/2})}$ is sufficiently small (the smallness was not quantified). See also \cite{guillod} for similar results on the weak - strong uniqueness of weak solutions to \eqref{1.1} in $\R^2$ under an assumption that a weighted $L^\infty$-norm of $\bar{\bu}$ is sufficiently small with a suitable weight.  Here, (i) of Theorem \ref{theorem2.2} does not require $|\nabla \bar{\bu}| \in L^\infty(\R^2_+, x_2^2)$.  Moreover, besides providing a sufficient condition using several different weights, (i) explicitly quantifies a  required condition on the size of $\|\bar{\bu}\|_{L^\infty(\R^2_+, \omega_i^{1/2})}$, namely $\|\bar{\bu}\|_{L^\infty(\R^2_+, \omega_i^{1/2})}<1/4$ for either one of $i =0,1,2$. Certainly, it is not clear if this quantification is optimal. Note also that if $\|\bar{\bu}\|_{L^\infty(\R^2_+, \omega_i^{1/2})}<1/4$ holds for either $i=1$ or $i=2$, then it holds for $i =0$ due to \eqref{weight-relation}. But, the reverse direction is unclear.  It is also unclear that if $\|\bar{\bu}\|_{L^\infty(\R^2_+, \omega_2^{1/2})}<1/4$ then $\|\bar{\bu}\|_{L^\infty(\R^2_+, \omega_1^{1/2})}<1/4$ due to the anisotropic structure of $\omega_1$. Regarding this, we also point out that  the anisotropic  type mixed-norm condition (ii) in Theorem \ref{theorem2.2} does not seem to appear elsewhere. Note that  both conditions \eqref{2.2} - \eqref{2.2-2} are invariant under the natural scaling for \eqref{1.1} as pointed out in  Remark \ref{remark-scaling} below.

\smallskip
The literature studying \eqref{1.1} in different types of domains in two or higher dimensions is vast. Let us briefly discuss a few research lines related to Theorems \ref{theorem2.1}-\ref{theorem2.2}. Since the well-known work \cite{Leray, Leray-2}, the uniqueness of solutions to Navier-Stokes equations \eqref{1.1} is a classical open problem for both stationary and non-stationary equations.  As we already discussed, for the stationary equations in an unbounded  domain $\mathcal{D} \subset \R^2$, the behavior of solutions at infinity is not well-understood. As such the uniqueness problem is interesting and challenging. Special attention has been paid to the problem in two-dimensional aperture domain with non-zero flux introduced and studied in \cite{Heywood}. It was shown in \cite{Gal-14} that the velocity tends to zero in the $L^2$-norm for arbitrary values of the flux. For small fluxes and with the symmetric assumption with respect to the $x_2$-axis, it was proved in \cite{Gal-14, Naz} that the asymptotic behavior is given by a Jeffery-Hamel solution. The problem on the asymptotic
behavior of the non-symmetric solutions in the two-dimensional aperture domains still remains open. In another research direction, a problem in a straight channel connected to a half-plane  was also studied in  \cite{Naz-1, Naz-2}, and under certain conditions, the asymptotic behavior given by a Jeffery-Hamel flow in the half-plane and by the Poiseuille flow in the channel was proved.  Similarly, problem  \eqref{1.1} in the whole plane and exterior domains in two or higher dimensions has been studied extensively. One can find in  \cite{GKR, GSohr, guillod, KP, G-W, Yamazaki} and the references therein for additional developments on existence and uniqueness of solutions. See also the books \cite{Lad, galdi} and classical work \cite{Amick, Fin, GSohr,  Heywood, gilbarg, Smith} for more classical results on uniqueness, asymptotic behaviors for solutions of both time-dependent and time-independent Navier-Stokes equations \eqref{1.1}. In a special interest, it  is worth pointing out that in the case where there is no external force and when the domain is $\R^2$, Liouville type theorem was proved in \cite{GW}. Moreover,  when the external force has compact support,  the problem \eqref{1.1} with a prescribed constant limit of a solution at spatial infinity in the whole plane $\R^2$ was also studied in \cite{GKR}. It was shown in this paper that if the force is small enough, then the solution to this problem is unique.  See also \cite{DBC} for a recent interesting result on non-uniqueness of weak solutions to \eqref{1.1} in bounded two dimensional domains.

\begin{remark} \label{remark-scaling} The conditions \eqref{2.2} - \eqref{2.2-2} are invariant under the natural scaling of the Navier-Stokes equations. In particular, for any $\lambda >0$ and let $\bar{\bu}_\lambda(\bx) = \lambda\bar{\bu}(\lambda \bx)$. We have
\[
\begin{split}
\|\bar{\bu}_{\lambda}\|_{L^\infty(\R^2_+, \omega_i^{1/2})} & = \displaystyle{\esssup_{\bx \in \R^2_+} \omega_i^{1/2}(\bx) |\bar{\bu}_\lambda(\bx)|} \\
& = \lambda \displaystyle{\esssup_{\bx \in \R^2_+} \omega_i^{1/2}(\bx) |\bar{\bu} (\lambda \bx)|} \\
& = \displaystyle{\esssup_{\bx \in \R^2_+} \omega_i^{1/2}(\lambda \bx) |\bar{\bu} (\lambda \bx)|}= \|\bar{\bu}\|_{L^\infty(\R^2_+, \omega_i^{1/2})},
\end{split}
\]
where we used the fact that $\lambda \omega_i^{1/2}(\bx) = \omega_i^{1/2}(\lambda \bx)$ for all $\bx \in \R^2_+$. Hence \eqref{2.2} is invariant under the above scaling. Similarly, by using a change of variables, we also have
\begin{align*}
    \left(\int_0^\infty x_2\|\bar{\bu}_\lambda(\cdot, x_2)\|_{L^\infty(\R)}^2 dx_2  \right)^{1/2}& = \left(\int_0^\infty x_2\big[\displaystyle{\esssup_{x_1 \in \R}}|\bar{\bu}_\lambda(x_1, x_2)|\big]^2 dx_2  \right)^{1/2}\\
    & =\left(\int_0^\infty x_2\big[\displaystyle{\esssup_{x_1 \in \R}}|\lambda\bar{\bu}(\lambda x_1,\lambda x_2)|\big]^2 dx_2  \right)^{1/2}\\
    & = \left(\int_0^\infty \lambda^2 x_2\big[\displaystyle{\esssup_{x_1 \in \R}}|\bar{\bu}(\lambda x_1,\lambda x_2)|\big]^2 dx_2  \right)^{1/2}\\
    & = \left(\int_0^\infty y_2\big[\displaystyle{\esssup_{x_1 \in \R}}|\bar{\bu}(\lambda x_1, y_2)|\big]^2 dy_2  \right)^{1/2}\\
    & = \left(\int_0^\infty y_2\|\bar{\bu}(\cdot, y_2)\|_{L^\infty(\R)}^2 dy_2  \right)^{1/2}.
\end{align*}
Therefore, \eqref{2.2-2} is invariant under the scaling $\bar{\bu} \rightarrow \bar{\bu}_\lambda$. 
\end{remark}

\smallskip

\smallskip
The rest of the paper is organized as following.  In Section \ref{Prelim-sec}, we recall and prove several preliminary estimates needed to prove our main results. In particular, a Hardy type inequality, several estimates on stream functions are established, and some results on the density of classes of functions in $\hdotzig{\upper}$ are proved.   Then, in the last section,  Section \ref{existence-sec}, we give the proofs of Theorem \ref{theorem2.1} and Theorem \ref{theorem2.2}.
 
\section{Preliminary inequalities and estimates} \label{Prelim-sec}

This section provides several inequalities and estimates needed for proving Theorem \ref{theorem2.1} and Theorem \ref{theorem2.2}.  
\subsection{Hardy inequality and trace inequality}
We begin with the following lemma on the Hardy inequality.
\begin{lemma}[Hardy inequality] \label{Hardy} For every $v\in \dot{H}^1_0(\R^2_+)$, it holds that
\begin{equation} \label{3.3}
    \|v\|_{L^2({\upper}, \omega^{-1}_i)} \leq 4 \| \nabla v\|_\ltwo{\upper},
\end{equation}
where $\omega_i$ is defined in \eqref{weight-def} for $i =0,1,2$.
\end{lemma}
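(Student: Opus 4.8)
The plan is to reduce all three inequalities to the single strongest case $\omega_0$, and to prove that case by combining the sharp one–dimensional Hardy inequality in the $x_2$–variable (in factorized form) with a two–dimensional weighted Hardy inequality handled by an explicit divergence vector field. First, since $\hdotz{\upper}$ is the completion of $\com{\upper}$, by density and Fatou's lemma it suffices to prove \eqref{3.3} for $v\in\com{\upper}$. Moreover, \eqref{weight-relation} gives $\omega_0^{-1}=\omega_1^{-1}+\omega_2^{-1}\ge\max\{\omega_1^{-1},\omega_2^{-1}\}$ pointwise on $\upper$, so $\|v\|_{L^2(\upper,\omega_i^{-1})}\le\|v\|_{L^2(\upper,\omega_0^{-1})}$ for $i=1,2$, and it is enough to establish the case $\omega=\omega_0$.

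Second, I would use the ground-state factorization attached to the sharp $x_2$–Hardy inequality. Writing $\phi=x_2^{-1/2}v$ and integrating by parts (all boundary terms vanish since $v$ is compactly supported in the open half-plane), one gets
\begin{equation*}
\int_{\upper}|\nabla v|^2\,d\bx=\int_{\upper}x_2\,|\nabla\phi|^2\,d\bx+\frac14\int_{\upper}\frac{v^2}{x_2^2}\,d\bx .
\end{equation*}
Dropping the nonnegative first term already yields \eqref{3.3} for $\omega_2$. For $\omega_0$, using $v^2=x_2\phi^2$ I compute $\int_{\upper}v^2\omega_0^{-1}=\int_{\upper}\frac{\phi^2}{|\bx|}\,d\bx+4\int_{\upper}\frac{v^2}{x_2^2}\,d\bx$; substituting this and the identity (multiplied by $16$) into the target $\int v^2\omega_0^{-1}\le16\int|\nabla v|^2$, the common term $4\int v^2 x_2^{-2}$ cancels and the target becomes equivalent to the weighted Hardy inequality
\begin{equation*}
\int_{\upper}\frac{\phi^2}{|\bx|}\,d\bx\le 16\int_{\upper}x_2\,|\nabla\phi|^2\,d\bx,\qquad\phi\in\com{\upper}.\tag{$\dagger$}
\end{equation*}

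Third, I would prove $(\dagger)$ by the method of divergence vector fields: if one produces $\boldsymbol{b}$ on $\upper$ with $\operatorname{div}\boldsymbol{b}=|\bx|^{-1}$ and $|\boldsymbol{b}|^2\le 4\,x_2/|\bx|$, then integrating by parts and applying Cauchy--Schwarz to the splitting $|\phi|\,|\boldsymbol{b}|\,|\nabla\phi|=(x_2^{-1/2}|\boldsymbol{b}|\,|\phi|)(x_2^{1/2}|\nabla\phi|)$ gives
\begin{equation*}
\int_{\upper}\frac{\phi^2}{|\bx|}\,d\bx=-2\int_{\upper}\phi\,\boldsymbol{b}\cdot\nabla\phi\,d\bx\le 2\Big(\int_{\upper}\frac{|\boldsymbol{b}|^2}{x_2}\phi^2\,d\bx\Big)^{1/2}\Big(\int_{\upper}x_2|\nabla\phi|^2\,d\bx\Big)^{1/2},
\end{equation*}
and since $|\boldsymbol{b}|^2/x_2\le 4/|\bx|$ this rearranges to $(\dagger)$. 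To build $\boldsymbol{b}$ I would pass to polar coordinates $\bx=(r\cos\theta,r\sin\theta)$, $\theta\in(0,\pi)$, and seek a scale-invariant field $\boldsymbol{b}=\beta(\theta)\,\hat{r}+\gamma(\theta)\,\hat{\theta}$; then $\operatorname{div}\boldsymbol{b}=(\beta+\gamma')/r$, $|\boldsymbol{b}|^2=\beta^2+\gamma^2$, and $x_2/|\bx|=\sin\theta$, so the two requirements reduce to the ODE constraint $\beta+\gamma'=1$ together with the pointwise bound $\beta^2+\gamma^2\le 4\sin\theta$ on $(0,\pi)$. Taking for instance $\gamma'=1-\tfrac{\pi}{2}\sin\theta$, so that $\beta=\tfrac{\pi}{2}\sin\theta$ and $\gamma(\theta)=\theta-\tfrac{\pi}{2}(1-\cos\theta)$, meets the constraint, and a direct calculation confirms $\beta^2+\gamma^2\le 4\sin\theta$.

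I expect the main obstacle to be exactly this last construction: verifying the magnitude bound $\beta^2+\gamma^2\le 4\sin\theta$ with the sharp constant $4$. This is the step that uses the precise anisotropic shape of $\omega_0$ and the factor $4$ appearing in both $\omega_0$ and $\omega_2$, and it is delicate because the estimate degenerates at the poles $\theta=0,\pi$ (both sides vanish), reflecting the borderline/critical nature of $(\dagger)$ near the boundary $\p\upper$. The remaining ingredients---the density reduction, the factorization identity, and the Cauchy--Schwarz argument---are routine.
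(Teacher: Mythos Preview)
Your argument is correct and, at the structural level, coincides with the paper's proof: both use the ground--state substitution $\phi=x_2^{-1/2}v$, and after the same integration by parts (your factorization identity is exactly what the paper obtains after its step ``integrate by parts on the second term'') the problem reduces to the weighted Hardy inequality
\[
\int_{\upper}\frac{\phi^2}{|\bx|}\,d\bx\le 16\int_{\upper}x_2\,|\nabla\phi|^2\,d\bx,
\]
which is your $(\dagger)$ and the paper's \eqref{3.4}. The only real difference is that the paper invokes \eqref{3.4} as a known fact from Maz'ya's book, whereas you supply a self--contained proof via the divergence vector field $\boldsymbol b=\beta(\theta)\hat r+\gamma(\theta)\hat\theta$. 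Your construction does work: with $\beta=\tfrac{\pi}{2}\sin\theta$ and $\gamma=\theta-\tfrac{\pi}{2}(1-\cos\theta)$ one checks (using the symmetry $\gamma(\pi-\theta)=-\gamma(\theta)$) that $(\beta^2+\gamma^2)/\sin\theta$ attains its maximum $\pi^2/4\approx 2.47$ at $\theta=\pi/2$, comfortably below $4$; so the ``main obstacle'' you anticipated is not borderline at all. The payoff of your route is a fully elementary, citation--free proof; the paper's route is shorter but relies on an external reference.
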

\begin{proof}  As $C_c^\infty(\R^2_+)$ is dense in $\dot{H}^1_0(\R^2_+)$, we may assume without loss of generality that $v \in C_c^\infty(\R^2_+)$. Recall that by the standard Hardy's inequality, see  \cite[Theorem 1, p. 214]{mazya} for example, we have
\begin{equation} \label{3.4}
    \|u\|_{L^2{(\R^2_+, |\bx|^{-1})}}\leq4\| \nabla u\|_{L^2(\R^2, x_2)}
\end{equation} 
for all $u\in C^\infty_c(\overline{\R^2_+})$.  Then, for the given $v\in\com{\upper}$, we apply \eqref{3.4} with 
\[ u(\bx):=x_2^{-1/2}v(\bx) \quad \text{for} \quad \bx = (x_1, x_2) \in \R^2_+ \] 
to obtain
\begin{align*}
    \|v\|_{L^2(\upper, x_2^{-1}|\bx|^{-1})}\leq4\left(\int_\upper \big( \frac{1}{4}x_2^{-2}v^2-x_2^{-1}vv_{x_2}+|\nabla v|^2\big)\,d\bx\right)^{1/2}.
\end{align*}
We then integrate by parts on the second term on the right hand side of the inequality to infer
\begin{align*}
      \|v\|_{L^2(\R_+^2, x_2^{-1}|\bx|^{-1})}\leq4\left(\int_\upper \big(-\frac{1}{4}x_2^{-2}v^2+|\nabla v|^2\big)\,d\bx\right)^{1/2}.
\end{align*}
This implies
\begin{align*}
    \|v \|_{L^2({\upper}, x_2^{-1}|\bx|^{-1})}^2 + 4\|v\|^2_{L^2({\upper}, x_2^{-2})} \leq16\|\nabla v\|^2_\ltwo{\upper},
\end{align*}
which proves the desired result.
\end{proof}
Due to Lemma \ref{Hardy}, we have the following remark.
\begin{remark} \label{zero-infity} If $\bu \in \hdotzig{\upper}$, then $\bu (\bx) =0$ as $|\bx| \rightarrow \infty$ in the sense that
\[
\lim_{r \rightarrow \infty} \int_{\beta_1}^{\beta_2} |\bu(r \cos(\theta), r \sin(\theta))|^2 d\theta =0
\]
for every $\beta_1, \beta_2 \in [0,\pi]$ and $\beta_1 < \beta_2$.
\end{remark}
\begin{proof} Let us denote
\[
\Omega_{r} = \Big \{ \bx= (s\cos(\theta), s\sin(\theta)): r < s < 2r, \quad \beta_1 < \theta < \beta_2\Big\},
\]
and
\[
S_r = \Big \{ \bx= (r\cos(\theta), r\sin(\theta)):  \beta_1 < \theta < \beta_2\Big\}.
\]
By applying the trace inequality on $\Omega_1$ and then using the scaling argument, we can find a constant $N >0$ such that
\[
\frac{1}{r}\|\bu\|_{L^2(S_r)}^2 \leq N \left( \frac{1}{r^2}\|\bu\|_{L^2(\Omega_r)}^2 + \|\nabla \bu\|_{L^2(\Omega_r)}^2\right), \quad \forall \ r \in (0, \infty).
\]
We note that
\[
\frac{1}{r}\|\bu\|_{L^2(S_r)}^2 = 2\pi \int_{\beta_1}^{\beta_2} |\bu(r\cos(\theta), r\sin(\theta))|^2 d\theta
\]
and
\[ x_2 \leq 2r  \quad  \forall \ \bx = (x_1, x_2) \in \Omega_r.
\]
Then, \[
\begin{split}
\int_{\beta_1}^{\beta_2} |\bu(r\cos(\theta), r\sin(\theta)|^2 d\theta & \leq N \left(4\|\bu\|_{L^2(\Omega_r, x_2^{-2})}^2 + \|\nabla \bu\|_{L^2(\Omega_r)}^2\right) \\
& \rightarrow 0 \quad \text{as} \quad r \rightarrow \infty,
\end{split}
\]
where we have used Lemma \ref{Hardy} in the last step. The proof is then completed.
\end{proof}
The next lemma gives two simple mixed-norm trace estimates of functions in homogeneous Sobolev space $\dot{H}^1_0(\R^2_+)$. 
\begin{lemma}[Trace type inequality]\label{Hardy-2}  There is  a constant $N>0$ such that 
\begin{equation} \label{L2-01-v}
\displaystyle{\esssup_{x_1 \in \mathbb{R}}\| v(x_1, \cdot)\|_{L^2((0,1))}} \leq N \|\nabla v\|_{L^2(\mathbb{R}^2_+)},
\end{equation}
and
\begin{equation} \label{3.3-2}
\displaystyle{ \esssup_{x_2 \in (0,\infty)} \big(x_2^{-1/2}\|v(\cdot, x_2)\|_{L^2(\R)}\big)}\leq   \| \nabla v\|_\ltwo{\upper},
\end{equation}
for $ v \in \dot{H}^1_0(\R^2_+)$.
\end{lemma}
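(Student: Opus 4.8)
The plan is to reduce to smooth compactly supported $v$ by density and then extract both bounds from the fundamental theorem of calculus together with Cauchy--Schwarz, using the boundary condition $v=0$ on $\partial\upper$ for \eqref{3.3-2} and the decay in $x_1$ coming from compact support for \eqref{L2-01-v}. Since $\com{\upper}$ is dense in $\hdotz{\upper}$ (as already invoked in the proof of Lemma \ref{Hardy}), it suffices to prove both inequalities for $v\in\com{\upper}$; the general case follows by approximation because the right-hand side is exactly the $\hdot{\upper}$ seminorm and both left-hand sides are dominated by it.

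First I would prove \eqref{3.3-2}. Fixing $x_2>0$ and using $v(x_1,0)=0$, I write, for each $x_1\in\R$,
\[
v(x_1,x_2) = \int_0^{x_2} \partial_{x_2}v(x_1,s)\,ds,
\]
so Cauchy--Schwarz in $s$ gives $v(x_1,x_2)^2 \le x_2 \int_0^{x_2} (\partial_{x_2}v)(x_1,s)^2\,ds$. Integrating in $x_1$ over $\R$ and enlarging the inner integral to $(0,\infty)$ yields
\[
\|v(\cdot,x_2)\|_{\ltwo{\R}}^2 \le x_2 \int_{\upper} (\partial_{x_2}v)^2\,d\bx \le x_2\,\|\nabla v\|_{\ltwo{\upper}}^2 .
\]
Dividing by $x_2$ and taking the essential supremum over $x_2\in(0,\infty)$ gives \eqref{3.3-2} with constant $1$.

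Next I would prove \eqref{L2-01-v} by feeding \eqref{3.3-2} into a one-dimensional Sobolev argument in the $x_1$ variable. Set $h(x_1)^2 := \int_0^1 v(x_1,x_2)^2\,dx_2$. Since $v$ has compact support, $h(x_1)\to 0$ as $x_1\to-\infty$, so the fundamental theorem of calculus in $x_1$ and Cauchy--Schwarz give
\[
h(x_1)^2 = \int_{-\infty}^{x_1}\!\!\int_0^1 2\,v\,\partial_{x_1}v\,dx_2\,dt \le 2\,\|v\|_{\ltwo{\R\times(0,1)}}\,\|\partial_{x_1}v\|_{\ltwo{\R\times(0,1)}},
\]
where the integrand is evaluated at $(t,x_2)$. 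The last factor is at most $\|\nabla v\|_{\ltwo{\upper}}$, while the first factor is controlled by integrating the bound from the previous step over $x_2\in(0,1)$:
\[
\|v\|_{\ltwo{\R\times(0,1)}}^2 = \int_0^1 \|v(\cdot,x_2)\|_{\ltwo{\R}}^2\,dx_2 \le \Big(\int_0^1 x_2\,dx_2\Big)\|\nabla v\|_{\ltwo{\upper}}^2 = \tfrac12\|\nabla v\|_{\ltwo{\upper}}^2 .
\]
Combining the two displays bounds $h(x_1)^2$ uniformly in $x_1$ by $\sqrt{2}\,\|\nabla v\|_{\ltwo{\upper}}^2$, which yields \eqref{L2-01-v} with $N=2^{1/4}$.

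I do not expect a genuine obstacle: both estimates are elementary once the density reduction is in place. The only points requiring care are keeping the essential suprema (rather than pointwise suprema) throughout and justifying the passage to general $v\in\hdotz{\upper}$ — which is cleanest done by proving the inequalities for $v\in\com{\upper}$ as above and then invoking density, since the right-hand sides converge along an approximating sequence and the left-hand sides are lower semicontinuous under convergence in the $\hdot{\upper}$ seminorm.
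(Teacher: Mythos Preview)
Your proof of \eqref{3.3-2} is identical to the paper's: fundamental theorem of calculus in $x_2$ from the boundary, then Cauchy--Schwarz and integration in $x_1$. Your proof of \eqref{L2-01-v}, however, takes a genuinely different route. The paper applies the trace inequality and Poincar\'e's inequality on the unit square $(0,1)\times(0,1)$ (using $v=0$ on the bottom edge), then translates in $x_1$ to get $\|v(x_1,\cdot)\|_{L^2((0,1))}\le N\|\nabla v\|_{L^2((x_1-1,x_1)\times(0,1))}$; this is a localized bound that uses only the gradient in a vertical strip of width $1$. You instead differentiate $h(x_1)^2=\|v(x_1,\cdot)\|_{L^2((0,1))}^2$ in $x_1$, integrate from $-\infty$, apply Cauchy--Schwarz, and then feed in the already-proved bound \eqref{3.3-2} to control $\|v\|_{L^2(\R\times(0,1))}$. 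Your argument is more self-contained (no black-box trace theorem) and even yields the explicit constant $N=2^{1/4}$, whereas the paper's constant is implicit. On the other hand, the paper's localized estimate $\|v(x_1,\cdot)\|_{L^2((0,1))}\le N\|\nabla v\|_{L^2((x_1-1,x_1)\times(0,1))}$ is slightly sharper in that it only sees the gradient near the vertical line $\{x_1\}\times(0,1)$, though this extra locality is not used anywhere in the paper. Both arguments are correct and elementary.
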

\begin{proof}  Due to the density of $C_c^\infty(\R^2_+)$ in $\dot{H}^1_0(\R^2_+)$, we may assume without loss of generality that $v \in C_c^\infty(\R^2_+)$. As $v(\cdot, 0) \equiv 0$, by applying the trace inequality and Poincar\'e inequality on $(0,1) \times (0,1)$, and then using a translation in $x_1$-variable, we find a constant $N>0$ such that
\[
\begin{split}
\|v(x_1, \cdot)\|_{L^2((0,1))} & \leq N \Big( \|v\|_{L^2((x_1 -1, x_1) \times (0,1))} + \|\nabla v\|_{L^2((x_1 -1, x_1) \times (0,1))} \Big)  \\
& \leq N \| \nabla v\|_{L^2((x_1 -1, x_1) \times (0,1))},
\end{split}
\]
for any $x_1 \in \R$. Therefore,
\[
\|v(x_1, \cdot)\|_{L^2((0,1))} \leq N \| \nabla v\|_{L^2(\R^2_+)}, \quad  \ \forall \ x_1 \in \R
\]
and the assertion \eqref{L2-01-v} follows.

\smallskip
We now prove \eqref{3.3-2}. By the fundamental theorem of calculus and the fact that $ v(x_1, 0) =0$, we have
\[
v(x_1, x_2)  = \int_0^{x_2} \partial_2 v(x_1, s) ds.
\]
Therefore, it follows from H\"{o}lder inequality that
\[
|v(x_1, x_2)|^2 \leq x_2 \int_0^{x_2} |\partial_2 v(x_1, s) |^2 ds  \leq x_2  \int_0^{\infty} |\partial_2 v(x_1, s) |^2 ds
\]
and hence
\[
\int_{\R}|v(x_1, x_2)|^2 dx_1 \leq x_2 \|\nabla v\|_{L^2(\R^2_+)}^2
\]
and \eqref{3.3-2} follows. The proof of the lemma is completed.
\end{proof}
\subsection{Estimates of stream functions} Let us recall that for each $\bv = (v_1, v_2) \in\hdotzig{\upper}$, the stream function $\psi$ associated with $\bv$ is defined by
\begin{equation} \label{stream-def}
\psi(\bx)= \int_{\Gamma} (v_2\,dx - v_1 \,dy),
\end{equation}
where $\Gamma$ is a continuous curve connecting $(0,0)$ and $\bx= (x_1,x_2)$.  Because $\nabla\cdot \bv =0$, we have
\[
 \nabla^\perp\psi(\bx)  = \bv(\bx), \ \forall \ \bx \in \R^2_+.
\]
and $\psi$ is independent on the choice of $\Gamma$, where we denote $\nabla^{\perp} =  \left(-\partial_2 , \partial_1   \right)$. 

\smallskip
Our next three lemmas give various estimates of the stream function $\psi$ which are needed in the next subsections. 

\begin{lemma} There exists a constant $N>0$ such that 
\begin{equation} \label{psi-wei-est}
		\left( \int_{-a}^{a} \int_0^b \frac{|\psi(\bx)|^2}{x_2^3} dx_2dx_1 \right)^{1/2} \leq N b^{1/2} \|\bv\|_{L^2([-a, a] \times [0, b], x_2^{-2})},
	\end{equation}
	for any $a>0, b>0$, and $\bv = (v_1, v_2) \in\hdotzig{\upper}$, where $\psi$ is the stream function associated with $\bv$ defined in \eqref{stream-def}.
\end{lemma}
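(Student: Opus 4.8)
The plan is to reduce the estimate to a one–dimensional Hardy-type bound in the $x_2$–variable obtained from a single application of the Cauchy–Schwarz inequality. First I would use the density of $\comsig{\upper}$ in $\hdotzig{\upper}$ to assume $\bv \in \comsig{\upper}$, so that the associated stream function $\psi$ is smooth and all the manipulations below are classical. For such $\bv$ the defining relation $\nabla^\perp \psi = \bv$ reads $-\partial_2 \psi = v_1$ and $\partial_1 \psi = v_2$. The crucial preliminary observation is that $\psi$ vanishes on $\partial\upper$: since $\bv = 0$ on $\partial\upper$ we have $\partial_1 \psi(x_1, 0) = v_2(x_1, 0) = 0$, so $\psi(\cdot, 0)$ is constant, and this constant is $\psi(0,0) = 0$ by the normalization in \eqref{stream-def}. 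The fundamental theorem of calculus then gives the representation
\begin{equation*}
\psi(x_1, x_2) = \int_0^{x_2} \partial_2 \psi(x_1, s)\, ds = -\int_0^{x_2} v_1(x_1, s)\, ds .
\end{equation*}

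Next I would estimate the inner integral for each fixed $x_1$. Writing $v_1(x_1,s) = \big(v_1(x_1,s)/s\big)\cdot s$ and applying Cauchy–Schwarz,
\begin{equation*}
|\psi(x_1, x_2)|^2 \leq \left( \int_0^{x_2} \frac{|v_1(x_1, s)|^2}{s^2}\, ds \right)\left( \int_0^{x_2} s^2\, ds \right) = \frac{x_2^3}{3} \int_0^{x_2} \frac{|v_1(x_1, s)|^2}{s^2}\, ds .
\end{equation*}
Dividing by $x_2^3$ and using $x_2 \leq b$ to enlarge the $s$–integral to $(0,b)$ yields the pointwise bound $|\psi(x_1,x_2)|^2 / x_2^3 \leq \tfrac13 \int_0^b |v_1(x_1,s)|^2 s^{-2}\, ds$, whose right-hand side is independent of $x_2$. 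Integrating in $x_2$ over $(0,b)$ produces the factor $b$, and then integrating in $x_1$ over $(-a,a)$ and using $|v_1| \leq |\bv|$ gives
\begin{equation*}
\int_{-a}^{a} \int_0^b \frac{|\psi(\bx)|^2}{x_2^3}\, dx_2\, dx_1 \leq \frac{b}{3} \int_{-a}^{a} \int_0^b \frac{|v_1(x_1, s)|^2}{s^2}\, ds\, dx_1 \leq \frac{b}{3}\, \|\bv\|_{L^2([-a,a]\times[0,b],\, x_2^{-2})}^2 .
\end{equation*}
Taking square roots establishes \eqref{psi-wei-est} with $N = 1/\sqrt{3}$.

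The core computation is therefore elementary, so the only points that genuinely require care are the two structural facts used at the start: the vanishing of $\psi$ on the boundary (which relies on the no-slip condition encoded in $\hdotzig{\upper}$) and the passage from smooth $\bv$ to a general $\bv \in \hdotzig{\upper}$. For the latter I would take $\bv_n \in \comsig{\upper}$ with $\nabla \bv_n \to \nabla \bv$ in $L^2(\upper)$; by Lemma \ref{Hardy} applied with $\omega_2 = x_2^2/4$, this also gives $\bv_n \to \bv$ in $L^2(\upper, x_2^{-2})$, so the right-hand side converges. By linearity $\bv \mapsto \psi$ the estimate shows that $\{x_2^{-3/2}\psi_n\}$ is Cauchy in $L^2([-a,a]\times[0,b])$, and its limit is identified with $x_2^{-3/2}\psi$ through $\nabla \psi_n = \bv_n^\perp \to \bv^\perp = \nabla\psi$ in $L^2$ together with the common normalization $\psi_n(0,0)=0$. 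Thus \eqref{psi-wei-est} extends to all $\bv \in \hdotzig{\upper}$, which I expect to be the only mildly technical step.
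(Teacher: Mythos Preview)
Your proof is correct and follows essentially the same route as the paper: reduce to smooth $\bv$ by density, use $\psi(x_1,0)=0$ and the fundamental theorem of calculus to write $\psi(x_1,x_2)=-\int_0^{x_2} v_1(x_1,s)\,ds$, then apply Cauchy--Schwarz with the splitting $|v_1|=|v_1|s^{-1}\cdot s$ to get $|\psi|^2\le N x_2^3\int_0^{x_2}|v_1|^2 s^{-2}\,ds$, and integrate. The only cosmetic differences are that you track the explicit constant $N=1/\sqrt{3}$ and spell out the limit passage, while the paper bounds $|v_1|$ by $|\bv|$ one step earlier and leaves the density step implicit.
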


\begin{proof}  Again, without loss of generality, we can assume $\bv = (v_1, v_2) \in C_{c,\sigma}^\infty(\R^2_+)$. As $\psi$ is independent on the choice of $\Gamma$, it follows from \eqref{stream-def}  that
	\[
	\psi(\bx)=-\int_0^{x_2} v_1(0,s)\,ds+\int_0^{x_1} v_2(s,x_2)\,ds.
	\]
Then $\psi(x_1, 0) =0$, and by the fundamental theorem of calculus, we obtain
\[
|\psi(x_1, x_2)| \leq \int_0^{x_2} |\partial_2\psi(x_1, s)| ds \leq \int_0^{x_2} |\bv(x_1, s)| ds.
\]
From this and the H\"{o}lder inequality,  it follows  that
\[
|\psi(x_1, x_2)|^2  \leq N x_{2}^{3}  \int_0^{x_2}|\bv(x_1, s)|^2 s^{-2} ds \leq N x_{2}^{3}  \int_0^{b}|\bv(x_1, s)|^2 s^{-2} ds
\]
for $(x_1, x_2) \in [-a, a] \times [0, b]$. Hence, by dividing both sides of the estimates by $x_2^{3}$ and then integrating the result on $[-a, a] \times [0, b]$, we obtain
\[
\left( \int_{-a}^{a} \int_0^b \frac{|\psi(\bx)|^2}{x_2^3} dx_2dx_1 \right)^{1/2} \leq N b^{1/2} \left(\int_{-a}^a \int_0^{b}|\bv(x_1, s)|^2 s^{-2} ds dx_1\right)^{1/2}.
\]
The proof of  \eqref{psi-wei-est} is completed.
\end{proof}

In Lemmas \ref{lemma-psi} and \ref{lemma-stream-est-chi} below, we provide necessary estimates for the stream functions associated with vector fields in $\hdotzig{\R^2_+}\cap\linf{\R^2_+,\omega^{1/2}}$ and in $\hdotzig{\R^2_+}\cap\mathcal X$, respectively.

\begin{lemma} \label{lemma-psi} There exists a constant $N>0$ such that 
if  $\bv = (v_1, v_2) \in\hdotzig{\upper}\cap\linf{\R^2_+,\omega^{1/2}}$ with $\omega \in \{\omega_0, \omega_1, \omega_2\}$, we have
	\begin{equation} \label{psi-pointwise}
		|\psi(\bx)| \leq N \big(\|\nabla\bv\|_{L^2(\mathbb{R}^2_+)} + \|\bv\|_{L^\infty(\mathbb{R}^2_+, \omega^{1/2})}\big) g_0(\bx), \quad \forall \bx \in \mathbb{R}^2_+,
	\end{equation}
	where  $\psi$ is the stream function associated with $\bv$ defined in \eqref{stream-def}, and
	\[
	g_0(\bx) = (1+ |x_1|^{1/2})|x_2|^{1/2}\chi_{\{x_2 \leq1\}}(x_2)  + \Big(\ln(x_2)+ \frac{|x_1|}{x_2}\Big)\chi_{\{x_2>1\}}(x_2) ,\]
	for $\bx = (x_1, x_2) \in \mathbb{R}^2_+$. 
\end{lemma}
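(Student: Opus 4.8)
The plan is to reduce to the single weight $\omega_0$, represent $\psi$ along a bent path, and estimate its vertical and horizontal parts separately on $\{x_2\le 1\}$ and $\{x_2>1\}$. Throughout write $A:=\|\nabla\bv\|_{L^2(\R^2_+)}$ and $B:=\|\bv\|_{L^\infty(\R^2_+,\omega^{1/2})}$, and, exactly as in the preceding lemmas, we may argue for $\bv\in\comsig{\R^2_+}$ and then pass to the limit. Since \eqref{weight-relation} gives $\omega_0^{-1}=\omega_1^{-1}+\omega_2^{-1}\ge\max(\omega_1^{-1},\omega_2^{-1})$, we have $\omega_0\le\omega_i$ and hence $\|\bv\|_{L^\infty(\R^2_+,\omega_0^{1/2})}\le\|\bv\|_{L^\infty(\R^2_+,\omega_i^{1/2})}$ for $i=1,2$; thus it suffices to prove the estimate under control of $\|\bv\|_{L^\infty(\R^2_+,\omega_0^{1/2})}$, and the right-hand side for the actual $\omega$ then dominates. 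Moreover, using \eqref{weight-relation} together with $|\bx|\ge x_2$ on $\R^2_+$, one gets $\omega_0^{-1}(\bx)=\frac{1}{x_2|\bx|}+\frac{4}{x_2^2}\le 5\,x_2^{-2}$, so the hypothesis yields the clean pointwise bound $|\bv(\bx)|\le \sqrt5\,B\,x_2^{-1}$, which is what I will use for the far field.

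Next I would invoke path-independence of \eqref{stream-def}, integrating first up the $x_2$-axis from $(0,0)$ to $(0,x_2)$ and then horizontally to $\bx$, to obtain the representation $\psi(\bx)=-\int_0^{x_2}v_1(0,s)\,ds+\int_0^{x_1}v_2(s,x_2)\,ds=:-I_1(x_2)+I_2(\bx)$, so that $|\psi|\le|I_1|+|I_2|$. The two integrals will generate, respectively, the $x_2$-dependent and $x_1$-dependent parts of $g_0$.

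For $x_2\le 1$, I would bound $I_1$ by Cauchy--Schwarz and the trace inequality \eqref{L2-01-v}: $|I_1|\le x_2^{1/2}\|v_1(0,\cdot)\|_{L^2((0,1))}\le N A\,x_2^{1/2}$. For $I_2$ the weighted bound is useless near $\{x_2=0\}$, so instead I would use the Dirichlet condition $v_2(\cdot,0)=0$ to write $v_2(s,x_2)=\int_0^{x_2}\partial_2 v_2(s,t)\,dt$ and apply Cauchy--Schwarz twice (in $t$, then in $s$), giving $|I_2|\le x_2^{1/2}|x_1|^{1/2}\|\partial_2 v_2\|_{L^2(\R^2_+)}\le A\,|x_1|^{1/2}x_2^{1/2}$; summing yields the $\{x_2\le1\}$ form of $g_0$. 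For $x_2>1$, I would split $I_1=\int_0^1+\int_1^{x_2}$, estimating the first piece by the trace inequality ($\le NA$) and the second by the pointwise bound $|v_1(0,s)|\le\sqrt5\,B/s$ (which integrates to $\sqrt5\,B\ln x_2$), and estimate $I_2$ by $|v_2(s,x_2)|\le\sqrt5\,B/x_2$ to get $|I_2|\le\sqrt5\,B\,|x_1|/x_2$. Combining gives the $\{x_2>1\}$ form of $g_0$, up to a harmless additive constant coming from the near-boundary piece of $I_1$, which should be folded into $g_0$ (equivalently, reading $\ln x_2$ as $\ln(e x_2)$).

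The main obstacle is precisely that the weighted control $\omega_0^{-1/2}(\bx)\sim x_2^{-1}$ is not integrable across the boundary $\{x_2=0\}$, so it cannot be used to bound $\psi$ for small $x_2$; the essential idea is to replace it there by a gradient estimate exploiting the homogeneous Dirichlet condition on $v_2$, and to switch between this near-boundary (gradient) estimate and the far-field (weighted) estimate exactly at the height $x_2=1$. The only remaining technical care is the bookkeeping of the additive constant in the $x_2>1$ regime noted above.
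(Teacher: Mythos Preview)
Your approach is essentially the same as the paper's: the same bent path representation $\psi=-\int_0^{x_2}v_1(0,s)\,ds+\int_0^{x_1}v_2(s,x_2)\,ds$, the same trace/Poincar\'e bound on $\int_0^1|v_1(0,s)|\,ds$, the same fundamental-theorem-plus-double-Cauchy--Schwarz estimate for $I_2$ when $x_2\le1$, and the same weighted $L^\infty$ bound for the far field. Your preliminary reduction to $\omega_0$ (via $\omega_0\le\omega_i$) is a mild simplification over the paper, which carries all three weights through the calculation simultaneously.

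Your remark about the additive constant is well taken and in fact sharper than the paper: the paper's own argument for $x_2>1$ yields $|\psi|\le N\|\nabla\bv\|_{L^2}+N\|\bv\|_{L^\infty(\omega^{1/2})}(\ln x_2+|x_1|/x_2)$ and then simply asserts this is $\le N(A+B)g_0$, which is not literally true near $(x_1,x_2)=(0,1^+)$ since $g_0$ vanishes there. Your fix (read $\ln x_2$ as $1+\ln x_2$) is the right one, and since the lemma is only used in integrated form in the density lemmas, the discrepancy is harmless for the paper's applications.
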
 
\begin{proof} Due to the density of $C_{c, \sigma}^\infty(\R^2_+)$ in $\hdotzig{\upper}$, it is sufficient to prove the lemma for $\bv = (v_1, v_2) \in C_{c, \sigma}^\infty(\R^2_+)$. As $\psi$ is independent on the choice of $\Gamma$, it follows from \eqref{stream-def}  that
	\[
	\psi(\bx)=-\int_0^{x_2} v_1(0,s)\,ds+\int_0^{x_1} v_2(s,x_2)\,ds.
	\]
 We first note that by the the trace inequality and Poincar\'{e} inequality in the square $(0,1) \times (0,1)$, we see that
	\[
	\begin{split}
		\|v_1(0, \cdot)\|_{L^2((0,1))} & \leq N \big( \|v_1\|_{L^2((0,1) \times (0,1))} + \|\nabla v_1\|_{L^2((0,1) \times (0,1))}\big) \\
		& \leq N \|\nabla v_1\|_{L^2((0,1) \times (0,1))} \\
	\end{split}
	\]
	for some absolute constant $N>0$. Hence,
	\begin{equation} \label{est-0226-tra}
		\int_0^{1} |v_1(0, s)| ds \leq \|v_1 (0, \cdot)\|_{L^2((0,1))} \leq  N \|\nabla v_1\|_{L^2(\R^2_+)}.
	\end{equation}
	Now, we consider the case $x_2 \in( 1,\infty)$. Using the definitions of weights in \eqref{weight-def}, \eqref{est-0226-tra}, and with some elementary calculations, we see that
	\begin{align} \notag
		 |\psi(\bx)| &\leq \int_0^{1}|v_1(0,s)|\,ds + \int_1^{x_2}|\bv(0,s)|\,ds+\int_0^{|x_1|}|\bv(s,x_2)|\,ds\\ \notag
		&\leq N \|\nabla v_1\|_{L^2(\R^2_+)} + \int_1^{x_2}|\bv(0,s)|\omega^{1/2}(0,s)\omega^{-1/2}(0,s)\,ds \\ \notag
		& \qquad +\int_0^{|x_1|}|\bv(s,x_2)|\omega^{1/2}(s,x_2)\omega^{-1/2}(s,x_2)\,ds\\ \notag
		&\leq N \|\nabla v_1\|_{L^2(\R^2_+)}  \\ \notag 
		& \quad + N \|\bv\|_\linf{\R^2_+,\omega^{1/2}}\left(\
		\int_1^{x_2}\frac{1}{s}\,ds
		+\int_0^{|x_1|}\frac{1}{x_2^{1/2}(s^2+x_2^2)^{1/4}}\,ds+\int_0^{|x_1|}\frac{1}{x_2}\,ds\right)\\ \notag
		&\leq N \|\nabla v_1\|_{L^2(\R^2_+)} + N\|\bv\|_\linf{\R^2_+,\omega^{1/2}}\left(\ln (x_2) +\int_0^{|x_1|}\frac{1}{x_2}\,ds \right)\\ \label{case1}
		&\leq N \|\nabla v_1\|_{L^2(\R^2_+)} + N\|\bv\|_\linf{\R^2_+,\omega^{1/2}}\left(\ln(x_2) +\frac{|x_1|}{x_2}\right).	\end{align}
	Next, we consider the case $x_2 \in (0,1]$. By H\"{o}lder inequality and \eqref{est-0226-tra} we see that
	\[
	\begin{split}
		\left| \int_0^{x_2} v_1(0, s) ds \right| & \leq \int_0^{x_2} |v_1(0, s)| ds  \leq x_2^{1/2}\|v_1 (0, \cdot)\|_{L^2((0,1))} \\
		& \leq N x_2^{1/2}\|\nabla\bv\|_{L^2(\mathbb{R}^2_+)}.
	\end{split}
	\]
On the other hand, by using  the fundamental theorem of calculus and $v_2(s,0) =0$, we have  
	\[
	v_2(s, x_2) = \int_0^{x_2} \partial_2 v_2(s, t) dt.
	\]
	Therefore, it follows from H\"{o}lder's inequality that
	\[
	|v_2(s, x_2) | \leq \int_0^{x_2}|\partial_2 v_2(s,t)| dt \leq x_2^{1/2} \left(\int_0^1|\partial_2v_2(s,t)|^2 dt\right)^{1/2}.
	\]
	Then, using H\"{o}lder's inequality again, we obtain
	\[
	\begin{split}
		\left|\int_0^{x_1} v_2(s, x_2) ds\right| & \leq x_2^{1/2}\int_0^{|x_1|}\left(\int_0^1|\partial_2v_2(s,t)|^2 dt\right)^{1/2} ds \\
		& \leq x_2^{1/2} |x_1|^{1/2} \left(\int_0^{|x_1|}\int_0^1 | \partial_2v_2(s,t)|^2dsdt \right)^{1/2} \\
		& \leq x_2^{1/2}|x_1|^{1/2}\|\nabla\bv\|_{L^2(\mathbb{R}^2_+)}.
	\end{split}
	\]
	As a result, we have
	\begin{equation} \label{case2}
		|\psi(\bx)| \leq N \|\nabla\bv\|_{L^2(\mathbb{R}^2_+)}(1 + |x_1|^{1/2})x_2^{1/2}, \quad \forall \ \bx = (x_1, x_2) \in \mathbb{R} \times (0,1).
	\end{equation}
	Then, it follows from \eqref{case1} and \eqref{case2} that
	\[
	|\psi(\bx)| \leq N \big(\|\nabla\bv\|_{L^2(\mathbb{R}^2_+)} + \|\bv\|_{L^\infty(\mathbb{R}^2_+, \omega^{1/2})}\big) g_0(\bx),
	\]
	for all $\bx \in \mathbb{R}^2_+$, and the proof of \eqref{psi-pointwise} is completed.
\end{proof}

Next,  recall that  space $\mathcal{X}$ is defined in \eqref{X-def}. We denote
\begin{equation} \label{newspace}
\|v\|_{\mathcal{X}} = \left( \int_{0}^\infty x_2\|v(\cdot, x_2)\|_{L^\infty(\R)}^2 dx_2\right)^{1/2}.
\end{equation}
Then, we also have the following lemma on the estimate of the stream function $\psi$.  \begin{lemma} \label{lemma-stream-est-chi} There is  a constant  $N>0$ such that
\begin{equation} \label{stream-est-chi}
 |\psi(\bx)|  \leq  N \|\nabla v_1\|_{L^2(\R^2_+)}  +  \|v_1\|_{\mathcal{X}} (\ln x_2)^{1/2} \chi_{\{x_2 >1\}}(x_2),
\end{equation}
for every $\bx = (x_1, x_2) \in \R^2_+$ and for $\bv = (v_1, v_2)\in \hdotzig{\upper} \cap \mathcal{X}$, where $\psi$ is the stream function associated with $\bv$ defined in \eqref{stream-def}.
\end{lemma}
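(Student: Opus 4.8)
The plan is to exploit the feature that distinguishes \eqref{stream-est-chi} from Lemma~\ref{lemma-psi}: the right-hand side involves only $v_1$, not $v_2$. The decisive first step is therefore to rewrite the stream function as a purely \emph{vertical} integral of $v_1$. As in the preceding lemmas, I would first reduce to $\bv = (v_1, v_2) \in C^\infty_{c,\sigma}(\R^2_+)$ by density. Since $\nabla^\perp \psi = \bv$ gives $-\partial_2 \psi = v_1$, and since $\psi(x_1, 0) = 0$ (the boundary condition $v_2(\cdot, 0) \equiv 0$ forces $\psi$ to be constant on $\partial \R^2_+$, normalized to $0$ via \eqref{stream-def}), the fundamental theorem of calculus yields
\[
\psi(x_1, x_2) = -\int_0^{x_2} v_1(x_1, s)\, ds, \qquad \bx = (x_1, x_2) \in \R^2_+.
\]
This is the key simplification: $v_2$ drops out entirely, and the structure of the $\mathcal{X}$-norm, which integrates the horizontal sup-norm against $x_2\,dx_2$, matches this vertical integral exactly. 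I would then estimate $|\psi(\bx)| \le \int_0^{x_2} |v_1(x_1, s)|\, ds$ and split the analysis at $s = 1$.

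For the portion over $s \in (0, \min\{x_2, 1\})$, I would use Cauchy--Schwarz in $s$ together with the mixed-norm trace inequality \eqref{L2-01-v} of Lemma~\ref{Hardy-2} to obtain
\[
\int_0^{\min\{x_2, 1\}} |v_1(x_1, s)|\, ds \le \|v_1(x_1, \cdot)\|_{L^2((0,1))} \le N \|\nabla v_1\|_{L^2(\R^2_+)},
\]
uniformly in $x_1$. This produces the first term of the claimed bound and, when $x_2 \le 1$, already finishes the estimate. For the remaining portion over $s \in (1, x_2)$ (present only when $x_2 > 1$), I would bound $|v_1(x_1, s)| \le \|v_1(\cdot, s)\|_{L^\infty(\R)}$, insert the weight $s^{1/2} \cdot s^{-1/2}$, and apply Cauchy--Schwarz:
\[
\int_1^{x_2} \|v_1(\cdot, s)\|_{L^\infty(\R)}\, ds \le \Big( \int_1^{x_2} s\,\|v_1(\cdot, s)\|_{L^\infty(\R)}^2\, ds \Big)^{1/2} \Big( \int_1^{x_2} \frac{ds}{s} \Big)^{1/2} \le \|v_1\|_{\mathcal{X}}\,(\ln x_2)^{1/2},
\]
where I enlarge the first integral to $(0, \infty)$ to recognize $\|v_1\|_{\mathcal{X}}$ from \eqref{newspace} and use $\int_1^{x_2} s^{-1}\, ds = \ln x_2$. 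Summing the two contributions and attaching the indicator $\chi_{\{x_2 > 1\}}$ gives \eqref{stream-est-chi}.

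The computation itself is short; the one step that must be gotten right is the vertical representation of $\psi$. Choosing to integrate $\partial_2 \psi = -v_1$ from the boundary, rather than working from the path-integral definition \eqref{stream-def} directly, is exactly what makes $v_2$ disappear and causes the anisotropic $\mathcal{X}$-norm to arise naturally from a single application of Cauchy--Schwarz. The only other point needing care is the density reduction: one must approximate in a topology controlling both $\|\nabla v_1\|_{L^2(\R^2_+)}$ and $\|v_1\|_{\mathcal{X}}$ so that the pointwise bound survives the passage to the limit for general $\bv \in \hdotzig{\upper} \cap \mathcal{X}$.
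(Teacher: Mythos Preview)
Your proposal is correct and follows essentially the same approach as the paper's proof: the vertical representation $\psi(x_1,x_2) = -\int_0^{x_2} v_1(x_1,s)\,ds$, the split at $s=1$, the use of \eqref{L2-01-v} from Lemma~\ref{Hardy-2} for the low part, and the weighted Cauchy--Schwarz for the high part are all identical to what the paper does. The only cosmetic difference is that the paper works directly (since Lemma~\ref{Hardy-2} is already stated for $\dot H^1_0(\R^2_+)$) rather than first reducing to $C^\infty_{c,\sigma}(\R^2_+)$ by density.
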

\begin{proof} Again, as the definition of $\psi$ in \eqref{stream-def} is independent on the choice of $\Gamma$, we have
\[
\psi(\bx) = -\int_0^{x_2} v_1(x_1,s)\,ds.
\]
Consider $x_2\in (0,1]$. Applying H\"{o}lder inequality and \eqref{L2-01-v} of Lemma \ref{Hardy-2}, we have 
\[ 
\begin{split}
|\psi(\bx)|\leq \int_{0}^{x_2} \big| v_1(x_1,s)\big|\,ds & \leq  x_2^{1/2}\left( \int_{0}^{1} |v_1(x_1, s)|^2\,ds\right)^{1/2} \\
& \leq N x_2^{1/2} \|\nabla v_1\|_{L^2(\R^2_+)}.
\end{split}
\]
This implies \eqref{stream-est-chi}. 

\smallskip
Next, we consider $x_2\in (1,\infty)$.  By H\"{o}lder inequality and \eqref{L2-01-v} of Lemma \ref{Hardy-2} again, we have 
	\begin{align*}
		|\psi(\bx)|&\leq \Big|\int_{0}^{1} v_1(x_1,s)\,ds\Big|+\Big|\int_1^{x_2} v_1(x_1,s)\,ds\Big|\\
		&\leq \left( \int_{0}^{1} |v_1(x_1, s)|^2\,ds\right)^{1/2}+\Big(\int_1^{x_2}\frac{1}{s}\,ds\Big)^{1/2}\Big(\int_1^{x_2} s|v_1(x_1,s)|^2\,ds\Big)^{1/2}\\
		& \leq N\|\nabla v_1\|_{L^2(\R^2_+)}+ (\ln x_2)^{1/2} \Big(\int_1^{\infty} s\|v_1(\cdot, x_2)\|_{L^\infty(\R)}^2\,ds\Big)^{1/2}\\
		&\leq N \|\nabla v_1\|_{L^2(\R^2_+)} +  \|v_1\|_{\mathcal{X}}  (\ln x_2)^{1/2}.
	\end{align*}
Then, \eqref{stream-est-chi} also follows. The proof of the lemma is completed.
\end{proof}
\subsection{Density lemmas} This subsection establishes two density lemmas which are needed to prove Theorem \ref{theorem2.2}. The following lemma proves a density property of $\comsig{\upper}$ in $\hdotzig{\upper}\cap\linf{\R^2_+,\omega^{1/2}}$, which is a consequence of Lemma \ref{lemma-psi}.
\begin{lemma} \label{approximationlemma} Let $\omega \in \{\omega_0, \omega_1, \omega_2\}$. Then, for every $\bv\in\hdotzig{\upper}\cap\linf{\R^2_+,\omega^{1/2}}$, there exists a sequence $\{\bv_n\}_{n\in\N}\subset\comsig{\upper}$ satisfying the following two properties:
\begin{itemize}
\item[\textup{(i)}] The sequence $\{\bv_n\}_{n\in\N}$ converges to $\bv$ in $\hdotzig{\upper}$.
\item[\textup{(ii)}] For any $\bu\in\hdotzig{\upper}$, the sequence $\{\bu\otimes\bv_n\}_{n\in\N}$ converges to $\bu\otimes\bv$ in $\ltwo{\upper}$.
\end{itemize}
\end{lemma}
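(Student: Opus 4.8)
The plan is to build the sequence from the stream function of $\bv$, cutting it off so that the divergence-free constraint is preserved automatically. Since $\bv \in \hdotzig{\upper}\cap\linf{\R^2_+,\omega^{1/2}}$, I write $\bv = \nabla^\perp \psi$ with $\psi$ the stream function from \eqref{stream-def}, for which Lemma \ref{lemma-psi} supplies the pointwise bound $|\psi(\bx)| \le N M\, g_0(\bx)$, where $M = \|\nabla\bv\|_{L^2(\R^2_+)} + \|\bv\|_{L^\infty(\R^2_+,\omega^{1/2})}$. I fix the logarithmic radial cutoff $\eta_n$ equal to $1$ on $\{|\bx|\le n\}$, to $0$ on $\{|\bx|\ge n^2\}$, and equal to $2 - \ln|\bx|/\ln n$ in between, so that $\nabla\eta_n$ and $\nabla^2\eta_n$ are supported in the annulus $A_n = \{n \le |\bx| \le n^2\}$ with $|\nabla\eta_n| \lesssim (|\bx|\ln n)^{-1}$ and $|\nabla^2\eta_n| \lesssim (|\bx|^2\ln n)^{-1}$. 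Setting $\bw_n := \nabla^\perp(\eta_n\psi) = \eta_n\bv + \psi\,\nabla^\perp\eta_n$, taking $\nabla^\perp$ of a scalar keeps $\bw_n$ divergence-free with horizontal support in $\{|\bx|\le n^2\}$, and the four terms of $\nabla\bw_n$ are each in $\ltwo{\upper}$. I then obtain $\bv_n \in \comsig{\upper}$ from $\bw_n$ by an upward vertical translation in $x_2$ followed by a mollification at a finer scale; this pushes the support off $\{x_2=0\}$ and, for each fixed $n$, approximates the fixed $L^2$-objects $\nabla\bw_n$ and $\bu\otimes\bw_n$ to any tolerance by continuity of translations and mollifiers, so a diagonal argument produces the final sequence. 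I will suppress these routine finishing steps and work with $\bw_n$.

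For (i) I expand $\nabla(\bw_n - \bv) = \nabla\nabla^\perp\big((\eta_n-1)\psi\big)$ into three groups. The term $(\eta_n-1)\nabla\bv$ is supported in $\{|\bx|\ge n\}$ and tends to $0$ in $\ltwo{\upper}$ by dominated convergence. The terms carrying exactly one derivative of $\eta_n$, namely $\nabla\eta_n\otimes\bv$ and $\nabla\psi\otimes\nabla^\perp\eta_n$, are bounded pointwise by $|\bv|\,|\nabla\eta_n|$ (using $|\nabla\psi| = |\bv|$), and since $\int_{\R^2_+}|\bv|^2/|\bx|^2 \le \int_{\R^2_+}|\bv|^2/x_2^2 < \infty$ by Lemma \ref{Hardy}, their $L^2$-norms over $A_n$ vanish. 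Finally $\psi\,\nabla\nabla^\perp\eta_n$ is bounded by $NM\,g_0/(|\bx|^2\ln n)$, and a direct computation gives $\int_{\R^2_+} g_0(\bx)^2/|\bx|^4\,d\bx < \infty$, so its tail over $A_n$ also vanishes. Hence $\nabla\bv_n \to \nabla\bv$ in $L^2$, which is (i).

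For (ii) I first record, using $|\bv| \le \|\bv\|_{L^\infty(\R^2_+,\omega^{1/2})}\,\omega^{-1/2}$ and Lemma \ref{Hardy}, that $\int_{\R^2_+}|\bu|^2|\bv|^2 \le \|\bv\|_{L^\infty(\R^2_+,\omega^{1/2})}^2 \int_{\R^2_+}|\bu|^2\omega^{-1} < \infty$, so $\bu\otimes\bv \in \ltwo{\upper}$. Writing $\bw_n - \bv = (\eta_n-1)\bv + \psi\,\nabla^\perp\eta_n$, the contribution $\int |\bu|^2|\eta_n-1|^2|\bv|^2 \le \int_{\{|\bx|\ge n\}}|\bu|^2|\bv|^2$ is a tail of a convergent integral and vanishes. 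The commutator $\int_{A_n}|\bu|^2|\psi|^2|\nabla\eta_n|^2 \lesssim (NM)^2(\ln n)^{-2}\int_{A_n}|\bu|^2 g_0^2/|\bx|^2$ is the crux, and here I use that $\bu$ satisfies all three Hardy inequalities of Lemma \ref{Hardy} simultaneously. On $\{x_2\le 1\}$ one has $g_0^2 \lesssim (1+|x_1|)x_2$ with $|x_1|\lesssim|\bx|$ and $x_2\le 1$, which bounds the integrand by $|\bu|^2/(x_2|\bx|)$; on $\{x_2>1\}$ the part $x_1^2/x_2^2$ is controlled using $x_1^2\le|\bx|^2$ by $|\bu|^2/x_2^2$, while for the part $(\ln x_2)^2$ the logarithm is absorbed via $\ln x_2 \le 2\ln n$ on $A_n$, again leaving $|\bu|^2/x_2^2$. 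Each resulting integral is a vanishing tail of $\int_{\R^2_+}|\bu|^2\omega_1^{-1}$ or $\int_{\R^2_+}|\bu|^2\omega_2^{-1}$, so the commutator tends to $0$ and (ii) follows.

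The step I expect to be the main obstacle is precisely this commutator in (ii). The borderline two-dimensional growth of $\psi$ forces the \emph{logarithmic} rather than a plain cutoff, so that the factor $(\ln x_2)^2$ on $A_n$ can be compensated by $(\ln n)^{-2}$; and the anisotropic factor $x_1^2/x_2^2$ in $g_0$ must be matched against the weight $x_2^{-2}$ coming from $\omega_2$, while the near-boundary growth is matched against $\omega_1^{-1}$. This is exactly where the sharp pointwise estimate of Lemma \ref{lemma-psi} and the simultaneous availability of all three Hardy inequalities of Lemma \ref{Hardy} are indispensable; by contrast, attempting to bound $\bw_n$ uniformly in $\linf{\R^2_+,\omega^{1/2}}$ fails for the anisotropic weight $\omega_1$, which is why I route (ii) through the three weighted tails rather than through a uniform sup-norm bound. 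Everything else — the vertical translation, mollification, and the diagonal extraction — is routine.
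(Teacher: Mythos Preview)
Your argument is correct in substance and follows the same strategy as the paper --- truncate the stream function with a logarithmic radial cutoff and apply Lemma~\ref{lemma-psi} together with the Hardy inequalities --- but you make two technical choices that differ from the paper. First, the paper builds the boundary cutoff $\varphi(nx_2)$ directly into $\eta_n$, so that $\nabla^\perp(\eta_n\psi)$ is already compactly supported in $\R^2_+$; this produces extra near-boundary terms (the $K_2,J_2,G_3,G_4$ pieces), which are handled via the weighted estimate \eqref{psi-wei-est}. Your choice to omit the boundary cutoff and instead translate and mollify afterward is cleaner in that it avoids all of those terms, but note that the step you suppress is not quite ``continuity of translations and mollifiers on $L^2$'': since $\bu$ is not translated, what you actually need is $T_\epsilon\bw_n\to\bw_n$ in $L^4$ on a fixed half-ball (via the Ladyzhenskaya/Gagliardo--Nirenberg inequality in 2D) together with $\bu\in L^4_{\mathrm{loc}}(\overline{\R^2_+})$, and then a choice of $\epsilon_n$ decaying fast enough to beat the growth $\|\bu\|_{L^4(B^+_{n^2})}\lesssim n^2\|\nabla\bu\|_{L^2}$, which makes the resulting sequence independent of $\bu$. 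Second, the paper uses a double-logarithmic radial cutoff while you use a single-logarithmic one supported in $\{n\le|\bx|\le n^2\}$; your choice works equally well, and your absorption of the $(\ln x_2)^2$ factor on $A_n$ against the prefactor $(\ln n)^{-2}$ is exactly the right move. One small correction: the integral $\int_{\R^2_+} g_0^2/|\bx|^4\,d\bx$ diverges at the origin, but you only need it over $\{|\bx|\ge 1\}$, where it is indeed finite and your tail argument applies.
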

\begin{proof} Let $\phi\in C^\infty_c(\R)$ to be a standard non-negative smooth cutoff function satisfying 
\[
\phi  = 1\quad \text{on} \quad (-1/2,1/2), \quad \phi =0 \quad \text{on} \quad \R \setminus (-1, 1), \quad \text{and} \quad  \|\phi\|_{C^2(\R)}\leq C_0.
\]
Also, let $\varphi \in C^\infty(\R)$ satisfying  
\[
\varphi  =0 \quad  \text{on} \quad [-1,1], \quad \varphi =1 \quad \text{on} \quad \R\setminus(-2,2), \quad  \text{and} \quad  \|\varphi\|_{C^2(\R)}\leq C_0.
\]
Note that here, $C_0>0$ is some generic constant. Now, let $M_0>e$ be fixed and for $n>e^e-M_0$ and $\bx=(x_1,x_2)\in\R^2_+$, we define 
\[
\eta_n(\bx)=\phi_n(\bx) \varphi(n x_2), 
\]
where
\[
\phi_n(\bx) = \phi\left(\frac{\ln(\ln(|\bx|+M_0))}{\ln(\ln(n+M_0))}\right).
\]
Observe that 
\begin{align*}
\phi_n(\bx)=\left\{
\begin{array}{cccl}
   &1& \quad &\text{when} \quad |\bx|\leq\gamma_n =e^{(\ln(n+M_0))^{1/2}}-M_0\\
   &0& \quad &\text{when} \quad |\bx|\geq n
   \end{array} \right.
\end{align*}
and 
\begin{align*}
\varphi(n x_2)=\left\{
\begin{array}{cccl}
   &  0& \quad &\text{when} \quad |x_2|\leq1/n\\
   &  1 & \quad &\text{when} \quad |x_2|\geq 2/n.
   \end{array} \right.
\end{align*}
Moreover, it follows from the product rule and the definition of $\eta_n$ that there is a generic constant $N>0$ such that
\begin{equation} \label{eqn-0302-1}
\begin{split}
|\nabla \eta_n (\bx) | & \leq \frac{N \chi_{A_n}(\bx)}{\ln(\ln(n+M_0))\ln(|\bx|+M_0)(|\bx|+M_0)}  +n N\chi_{I_n}(\bx),
\end{split}
	\end{equation}
and
\begin{equation} \label{eqn-0302-1-b}
	\begin{split}
|\nabla^2\eta_n (\bx)| &\leq \dfrac{N \chi_{A_n}(\bx)}{\ln(\ln(n+M_0))\ln(|\bx|+M_0)(|\bx|+M_0)|\bx|}\\
	& \qquad +\dfrac{n N \displaystyle{\chi_{A_n \cap I_n}}(\bx)}{\ln(\ln(n+M_0))\ln(|\bx|+M_0)(|\bx|+M_0)}\\
	& \qquad +n^2 N\chi_{I_n}(\bx),
	\end{split}
	\end{equation}
 for $\bx=(x_1, x_2) \in \R^2_+$ and for all $n > e^e -M_0$, where
 \begin{equation} \label{An-In}
 \begin{split}
 A_n  & = \big\{\bx = (x_1, x_2) \in \R^2_+: \gamma_n < |\bx|< n,  1/n < x_2 \big \}, \\
 I_n & =\big\{\bx = (x_1, x_2) \in \R^2_+: |\bx| \leq n,  x_2 \in (1/n, 2/n) \big\}.
 \end{split}
 \end{equation}
 	
Next, for the given $\bv \in\hdotzig{\upper}\cap\linf{\R^2_+,\omega^{1/2}}$, let $\psi$ be the stream function associated with $\bv$ defined in \eqref{stream-def}. We define
	\[
	\bv_n(\bx)= \nabla^\perp(\eta_n(\bx)\psi(\bx)), \quad \bx = (x_1, x_2) \in \mathbb{R}^2_+, 
	\]
	where $\nabla^{\perp} =  \left(-\partial_2 , \partial_1   \right)$. 
	We note that $\bv_n$ is compactly supported in $\R^2_+$ and it is divergence free. By the standard mollification, it is sufficient to prove the assertions of the lemma for the sequence $\{\bv_n\}$.
	Note that the constants $N>0$ used for the estimates in this proof change line by line and they are independent  of $n$. 

\smallskip	
We begin with proving (ii). Let $\bu\in\hdotzig{\upper}$ be fixed and we prove that
	\begin{equation} \label{lim-asser-1}
		\lim_{n\rightarrow \infty}  \|\bu\otimes(\bv-\bv_n)\|_\ltwo{\R^2_+} =0.
	\end{equation}
	Note that $\bv - \bv_n =(1-\eta_n)\bv - \psi \nabla^\perp(\eta_n)$ and 
	\begin{equation} \label{eqn-0303-2}
		\|\bu\otimes(\bv-\bv_n)\|_\ltwo{\R^2_+}\leq\|(1-\eta_n)\bu\otimes\bv\|_\ltwo{\R^2_+}+\|\psi \bu\otimes\nabla\eta_n\|_\ltwo{\R^2_+}.
	\end{equation}
	We then treat the two terms in the right hand side of \eqref{eqn-0303-2}. For the first term, we observe that
	\begin{equation}   \label{uv-finite-linfty}
	\begin{split}
		& \|\bu\otimes\bv\|_\ltwo{\R^2_+}  \leq \|\bv\|_{L^\infty(\R^2_+, \omega^{1/2})}\|\bu \|_\ltwo{\R^2_+, \omega^{-1}} \\
		&  \leq N \|\bv\|_{L^\infty(\R^2_+, \omega^{1/2})}\|\nabla \bu \|_\ltwo{\R^2_+} <\infty.    
		\end{split}
	\end{equation}
	Therefore, by the Lebesgue dominated convergence theorem, we obtain
	\begin{equation} \label{lim-1uv}
		\lim_{n\rightarrow \infty} \|(1-\eta_n)\bu\otimes\bv\|_\ltwo{\R^2_+} =0.
	\end{equation}
On the other hand, by \eqref{eqn-0302-1} and the triangle inequality, it follows that
 \[
 \begin{split}
 &   \|\psi \bu\otimes\nabla\eta_n\|_\ltwo{\R^2_+} \\
  &\leq \frac{N}{\ln(\ln(n+M_0))} \left(\int_{A_n}\frac{ |\psi(\bx)|^2 |\bu (\bx)|^2}{[\ln(|\bx|+M_0)]^2(|\bx|+M_0)^2} \,d\bx\right )^{1/2}\\
    &\qquad  +  n N \left( \int_{I_n} |\psi(\bx)|^2 |\bu (\bx)|^2 \,d\bx \right)^{1/2}\\
    &=K_1+K_2.
 \end{split}
\]
Then, by using \eqref{psi-pointwise} of Lemma \ref{lemma-psi}, we estimate $K_1$ by the following calculations
 \begin{align*}
     K_1^2 &= \frac{N}{[\ln(\ln(n+M_0))]^2}\left(\int_{A_n} \dfrac{(1+|x_1|^{1/2})^2x_{2}\chi_{\{1/n<x_2\leq 1\}}(x_2)|\bu(\bx)|^2}{ [\ln(|\bx|+M_0)]^2(|\bx|+M_0)^2} d\bx\right. \\
     &\quad\left.+ \int_{A_n} \dfrac{(\ln(x_2)+|x_1|x_2^{-1})^2\chi_{\{x_2> 1\}}(x_2)|\bu(\bx)|^2}{ [\ln(|\bx|+M_0)]^2(|\bx|+M_0)^2} d\bx \right) \\
     & \leq \frac{N}{[\ln(\ln(n+M_0))]^2}\left(\int_{A_n} \dfrac{(1+|x_1|)\chi_{\{x_2\leq 1\}}(x_2)|\bu(\bx)|^2}{ [\ln(|\bx|+M_0)]^2(|\bx|+M_0)^2x_2^2} d\bx \right. \\
     &\quad\left.+ \int_{A_n} \dfrac{|\bu(\bx)|^2}{ (|\bx|+M_0)^2} +\dfrac{|\bu(\bx)|^2}{ [\ln(|\bx|+M_0)]^2x_{2}^{2}} d\bx \right)\\
    &\leq \frac{N}{[\ln(\ln(n+M_0))]^2}\left(\frac{1}{[\ln(M_0)]^2M_0} + 1+\frac{1}{[(\ln(M_0)]^2}\right) \|\bu\|_\ltwo{\balln,x_{2}^{-2}}.
 \end{align*}
By the Hardy inequality (Lemma \ref{Hardy}), we have that
\begin{equation}\label{DCT}
	\|\bu\|_{L^2(\R^2_+, x_2^{-2})} \leq N \|\nabla \bu\|_{L^2(\R^2)} < \infty.
\end{equation}
Consequently, 
\[ 
\lim_{n\rightarrow \infty} K_1 =0.
\]
Similarly, we estimate $K_2$
 \begin{align*}
     K_2^2 &=n^2 N \int_{I_n} |\psi (\bx) |^2 |\bu(\bx)|^2\,d\bx\\
     &\leq n^2 N\int_{I_n} (1+|x_1|^{1/2})^2x_{2} \frac{|\bu (\bx)|^2x_{2}^2}{x_{2}^2}\,d\bx\\
     &\leq n^2 N\int_{I_n} (1+n^{1/2})^2 \frac{1}{n^3} \frac{|\bu(\bx)|^2}{x_{2}^2}\,d\bx\\
     &\leq \frac{N(1+n)}{n} \int_{I_n}  \frac{|\bu(\bx)|^2}{x_{2}^2}\,d\bx\\
     &\leq \frac{N(1+n)}{n} \|\bu\|_{\ltwo{I_n ,x_{2}^{-2}}}\leq N \|\bu\|_{\ltwo{\R^2_+\cap\{0<x_2<2/n\},x_{2}^{-2}}}.
 \end{align*}
Therefore, using \eqref{DCT} and the Lebesgue dominated convergence theorem, we conclude that
 $K_2$ converges to $0$ as $n$ goes to infinity. Thus,
	\begin{equation}\label{lim-2uv}
		\lim_{n\rightarrow \infty} \|\psi \bu\otimes\nabla\eta_n\|_\ltwo{\R^2_+} =0.
	\end{equation} 
	Then, the assertion \eqref{lim-asser-1} follows from \eqref{lim-1uv} and \eqref{lim-2uv}, and the proof of (ii) is completed.
	
\smallskip 
It now remains to prove (i), namely 
	\begin{equation} \label{lim-asser-2}
		\lim_{n\rightarrow \infty} \| \nabla \bv_n - \nabla \bv\|_{L^2(\R^2_+)} =0.
	\end{equation}
We note that
\begin{align} \notag
& \| \nabla \bv_n - \nabla \bv \|_{L^2(\R^2_+)} \\ \label{con-nabla}
	& \leq \|(1-\eta_n) \nabla \bv\|_{L^2(\R^2_+)} + 2 \|\nabla \eta_n \otimes\bv\|_{L^2(\R^2_+)} + \|\psi \nabla^2\eta_n\|_{L^2(\R^2_+)}.
\end{align}
As before, by using the Lebesgue dominated convergence theorem, we see that
	\begin{equation} \label{lim-2-1}
		\lim_{n\rightarrow \infty}  \|(1-\eta_n) \nabla \bv\|_{L^2(\R^2_+)} =0.
	\end{equation}
Next, using \eqref{eqn-0302-1}, we control the second term in the right hand side of \eqref{con-nabla} by 
\begin{align*}
    \|\nabla \eta_n \otimes\bv\|_{L^2(\R^2_+)}&\leq \frac{N}{\ln(\ln(n+M_0))}\left(\int_{A_n}  \dfrac{|\bv(\bx)|^2}{[\ln(|\bx|+M_0)]^2(|\bx|+M_0)^2}\,d\bx\right)^{1/2}\\
    &\qquad + n N \left( \int_{I_n}  |\bv(\bx) |^2 \,d\bx \right)^{1/2}\\
    &=J_1+J_2.
\end{align*}
Observe that 
\begin{align*}
    J_1^2& = \dfrac{N}{[\ln(\ln(n+M_0))]^2}\int_{A_n} \dfrac{|\bv(\bx)|^2}{[\ln(|\bx|+M_0)]^2(|\bx|+M_0)^2}\,d\bx\\
    &\leq \dfrac{N}{[\ln(\ln(n+M_0))]^2[\ln(M_0)]^2}\int_{A_n} \dfrac{|\bv(\bx) |^2}{x_2^2}\,d\bx\\
    &\leq \dfrac{N}{[\ln(\ln(n+M_0))]^2[\ln(M_0)]^2} \|\bv\|^2_\ltwo{\R^2_+,x_2^{-2}}.
\end{align*}
Therefore, $J_1$ goes to $0$ as $n$ goes to infinity. In addition, by a simple manipulation, we estimate $J_2$ as
\begin{align*}
    J_2^2 & = N n^2 \int_{I_n} \dfrac{x_2^2|\bv(\bx)|^2}{x_2^2} \,d\bx \leq N \|\bv\|^2_\ltwo{\R \times (0, 2/n)\}, x_2^{-2}}.
\end{align*}
Once again, we use \eqref{DCT} and the Lebesgue dominating convergence theorem to conclude that $J_2$ approaches $0$ as $n$ goes to infinity.  Hence, we have proved that
	\begin{equation} \label{lim-2-2}
		\lim_{n\rightarrow \infty} \|\nabla \eta_n \otimes\bv\|_{L^2(\R^2_+)} =0.
	\end{equation}

\smallskip
 Next, we consider the term $\|\psi\nabla ^2\eta_n\|_{L^2(\R^2_+)}$ on the right hand side of \eqref{con-nabla}.  By \eqref{psi-pointwise} and  \eqref{eqn-0302-1-b}, it follows that
	\begin{align*} 
	&  \|\psi \nabla ^2\eta_n\|_{L^2(\R^2_+)}^2 \\  
		& \leq \frac{N}{[\ln(\ln(n+M_0)]^2} \int_{A_n}\dfrac{x_2(1+|x_1|^{1/2})^2\chi_{\{x_2\leq1\}(x_2)}}{(\ln(|\bx|+M_0))^2(|\bx|+M_0)^2|\bx|^2} d\bx \\ 
		& \quad + \frac{N}{[\ln(\ln(n+M_0)]^2}\int_{A_n}\dfrac{(\ln(x_2)+|x_1|x_2^{-1})^2\chi_{\{x_2>1\}}}{[\ln(|\bx|+M_0)]^2(|\bx|+M_0)^2|\bx|^2} d\bx  \\
   &\quad+ N n^2\int_{A_n\cap I_n} |\psi(\bx)|^2d\bx + Nn^4 \int_{I_n}|\psi (\bx)|^2\, d\bx \\
		& = G_1  + G_2 + G_3 + G_4. 
	\end{align*}
We control $G_1$ by the following calculations
\begin{align*}
G_1 &=  \frac{N}{[\ln(\ln(n+M_0)]^2}  \int_{A_n}\dfrac{x_2(1+|x_1|^{1/2})^2\chi_{\{x_2\leq1\}(x_2)}}{(\ln(|\bx|+M_0))^2(|\bx|+M_0)^2|\bx|^2} d\bx\\
   & \leq \dfrac{N}{[\ln(\ln(n+M_0)]^2\gamma_n^2}\int_{A_n}\dfrac{1}{(\ln(|\bx|+M_0))^2(|\bx|+M_0)^2}d\bx\\
   & \quad+ \dfrac{N}{[\ln(\ln(n+M_0)]^2\gamma_n}\int_{A_n}\dfrac{1}{(\ln(|\bx|+M_0))^2(|\bx|+M_0)^2}d\bx\\
   & \leq \dfrac{N}{[\ln(\ln(n+M_0)]^2\gamma_n}\int_{\R^2_+}\dfrac{1}{(\ln(|\bx|+M_0))^2(|\bx|+M_0)^2}d\bx.
\end{align*}
Note that the last integral is finite which can be verified by  using polar coordinates.  Therefore,
\[
\lim_{n\rightarrow \infty} G_1 =0.
\]
\smallskip
In the same manner, the estimate for $G_2$ is shown below
\begin{align*}
G_2  &\leq \frac{N}{[\ln(\ln(n+M_0))]^2}\int_{\R^2_+}\dfrac{1}{(|\bx|^2+1)^2} d\bx\\
& \rightarrow 0 \quad \text{as} \quad n \rightarrow \infty.
\end{align*}
\smallskip
Next, using \eqref{psi-wei-est}, we control $G_4$ as 
\begin{align*}
    G_4&\leq N n^4\int_{I_n}  |\psi(\bx)|^2 \,d\bx\leq N n^4\int_{I_n} \dfrac{|\psi(\bx) |^2x_2^3}{x_2^3}\,d\bx\\
    &\leq N n\int_{-\infty}^\infty\int_0^{2/n}\dfrac{|\psi(x_1,x_2)|^2}{x_2^3}\,dx_2\,dx_1\leq N \|\bv\|^2_\ltwo{\R\times (0,2/n),x_2^{-2}}.
\end{align*}
Observe that from Hardy's inequality (Lemma \ref{Hardy})
\[
\|\bv\|_\ltwo{\R\times (0,2/n),x_2^{-2}}\leq\|\bv\|_\ltwo{\R^2_+,x_2^{-2}}\leq N \|\nabla \bv\|_{L^2(\R^2_+)} <\infty.
\]
Therefore, by the Lebesgue dominating convergence theorem we have $G_4$ goes to $0$ as $n$ goes to infinity.  Note that as $G_3 \leq G_4$, we conclude that
\[
\lim_{n\rightarrow \infty} \big(G_3 + G_4\big) =0.
\]
Collecting the limits of $G_k$ for $k =1,2,3, 4$ when $n \rightarrow \infty$ that we just proved, we have
	\begin{equation} \label{lim-2-3}
		\lim_{n\rightarrow \infty} \| \psi \nabla ^2\eta_n\|_{L^2(\R^2_+)}^2 =0.
	\end{equation}
Then, the assertion \eqref{lim-asser-2} follows from \eqref{lim-2-1}, \eqref{lim-2-2}, and \eqref{lim-2-3}.
\end{proof}

The next lemma proves the density type result of $\comsig{\upper}$ in $\hdotzig{\upper}\cap\mathcal X$ for which Lemma \ref{lemma-stream-est-chi} is used.  
\begin{lemma} \label{approximationlemma2} For every $\bv\in\hdotzig{\upper}\cap\mathcal{X}$, there exists a sequence $\{\bv_n\}_{n\in\N}\subset\comsig{\upper}$ satisfying the following properties:
	\begin{itemize}
		\item[\textup{(i)}] The sequence $\{\bv_n\}_{n\in\N}$ converges to $\bv$ in $\hdotzig{\upper}$.
		\item[\textup{(ii)}] For any $\bu\in\hdotzig{\upper}$, the sequence $\{\bu\otimes\bv_n\}_{n\in\N}$ converges to $\bu\otimes\bv$ in $\ltwo{\upper}$.
	\end{itemize}
\end{lemma}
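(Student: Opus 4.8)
The plan is to mirror the proof of Lemma \ref{approximationlemma} almost verbatim, since the geometric truncation does not depend on which function class $\bv$ belongs to. First I would take the same cutoff $\eta_n(\bx) = \phi_n(\bx)\varphi(nx_2)$, with the same gradient bounds \eqref{eqn-0302-1} and \eqref{eqn-0302-1-b} and the same regions $A_n, I_n$ from \eqref{An-In}, and set $\bv_n = \nabla^\perp(\eta_n \psi)$, where $\psi$ is the stream function associated with $\bv$. As before, each $\bv_n$ is compactly supported in $\R^2_+$ and divergence free, so after a standard mollification it suffices to establish (i) and (ii) for the sequence $\{\bv_n\}$. Only two ingredients change relative to Lemma \ref{approximationlemma}: the proof that $\bu\otimes\bv \in \ltwo{\upper}$, and the pointwise control of $\psi$, which here is provided by Lemma \ref{lemma-stream-est-chi} in place of Lemma \ref{lemma-psi}.

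For (ii), the new estimate replacing \eqref{uv-finite-linfty} is
\[
\|\bu\otimes\bv\|_\ltwo{\upper}^2 \leq \int_0^\infty \|\bv(\cdot, x_2)\|_{L^\infty(\R)}^2\, \|\bu(\cdot, x_2)\|_{L^2(\R)}^2\, dx_2 \leq \|\nabla\bu\|_{L^2(\R^2_+)}^2\, \|\bv\|_{\mathcal{X}}^2 <\infty,
\]
where the inner $L^2$-in-$x_1$ factor is bounded by the anisotropic trace inequality \eqref{3.3-2} of Lemma \ref{Hardy-2} and the outer integral is finite by the definition \eqref{newspace} of $\|\bv\|_{\mathcal{X}}$. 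With this, the first term $\|(1-\eta_n)\bu\otimes\bv\|_\ltwo{\upper}$ tends to $0$ by dominated convergence. For the second term I would split $\|\psi\,\bu\otimes\nabla\eta_n\|_\ltwo{\upper} \leq K_1 + K_2$ over $A_n$ and $I_n$ exactly as before, now inserting the bound \eqref{stream-est-chi}. Since that bound is $|\psi(\bx)| \leq N\|\nabla v_1\|_{L^2(\R^2_+)} + \|v_1\|_{\mathcal{X}}(\ln x_2)^{1/2}\chi_{\{x_2>1\}}(x_2)$, on the logarithmic part I would use $\ln x_2 \leq \ln(|\bx|+M_0)$ (valid once $x_2>1$, where $|\bx|\geq x_2$) to cancel one factor of $\ln(|\bx|+M_0)$ in the denominator; afterwards $K_1\to 0$ via the $[\ln\ln(n+M_0)]^{-1}$ prefactor together with $\|\bu\|_{\ltwo{\upper,x_2^{-2}}}<\infty$ (Lemma \ref{Hardy}). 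On $I_n$ one has $x_2 < 2/n \leq 1$, so $\psi$ is bounded there and the factor $x_2\sim 1/n$ absorbs the $n$ coming from $\nabla\eta_n$, giving $K_2\to 0$ just as in Lemma \ref{approximationlemma}.

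For (i) I would decompose $\nabla\bv_n - \nabla\bv$ into the same three pieces as in \eqref{con-nabla}. The first two, $\|(1-\eta_n)\nabla\bv\|_{L^2(\R^2_+)}$ and $\|\nabla\eta_n\otimes\bv\|_{L^2(\R^2_+)}$, do not involve $\psi$ and are handled verbatim as in Lemma \ref{approximationlemma} using $\|\bv\|_{\ltwo{\upper,x_2^{-2}}}<\infty$. For the remaining term $\|\psi\nabla^2\eta_n\|_{L^2(\R^2_+)}$ I would again split into $G_1,\dots,G_4$ following \eqref{eqn-0302-1-b}: the $A_n$-terms $G_1, G_2$ are controlled by \eqref{stream-est-chi} (the same $\ln x_2 \leq \ln(|\bx|+M_0)$ reduction makes the resulting integrals converge and vanish through the $[\ln\ln(n+M_0)]^{-2}$ prefactor and the smallness of $\gamma_n^{-1}$), while the $I_n$-terms satisfy $G_3 \leq G_4$ and are handled by the weighted stream estimate \eqref{psi-wei-est}, which (after using $n^4 x_2^3 \lesssim n$ on $I_n$) reduces $G_4$ to $N\|\bv\|_{\ltwo{\R\times(0,2/n), x_2^{-2}}}^2 \to 0$. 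Collecting the four limits yields (i).

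The one genuinely new point, and the only non-routine step, is the finiteness estimate for $\bu\otimes\bv$ displayed above: in Lemma \ref{approximationlemma} this came from a weighted $L^\infty$ bound combined with Hardy's inequality, whereas here it must instead be extracted from the anisotropic mixed norm $\|\cdot\|_{\mathcal{X}}$ through the sharp trace bound \eqref{3.3-2}. Everything else is a direct transcription; in fact the logarithmic weight $(\ln x_2)^{1/2}$ in \eqref{stream-est-chi} is milder than the growth function $g_0$ of Lemma \ref{lemma-psi}, so no estimate becomes more delicate than in the previous lemma.
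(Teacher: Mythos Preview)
Your proposal is correct and follows essentially the same approach as the paper: the same cutoff $\eta_n$ and approximants $\bv_n=\nabla^\perp(\eta_n\psi)$, the finiteness of $\|\bu\otimes\bv\|_{L^2}$ via the trace bound \eqref{3.3-2} and $\|\bv\|_{\mathcal X}$, the replacement of Lemma~\ref{lemma-psi} by Lemma~\ref{lemma-stream-est-chi} (with the $\ln x_2\le \ln(|\bx|+M_0)$ cancellation), and the treatment of the $I_n$-terms through \eqref{psi-wei-est}. The paper's proof is a nearly verbatim transcription of this outline.
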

\begin{proof} The proof is similar to that of Lemma \ref{approximationlemma} and we only outline some main steps. Let $\eta_n$ be defined as in the proof of Lemma \ref{approximationlemma}. For the given $\bv \in \hdotzig{\upper}\cap\mathcal{X}$, let $\psi$ be defined in \eqref{stream-def}  and let
		\[
		\bv_n(\bx)= \nabla^\perp(\eta_n(\bx)\psi(\bx)), \quad \bx = (x_1, x_2) \in \mathbb{R}^2_+.
		\]
		It is sufficient to prove the assertions of the lemma for the sequence $\{\bv_n\}_{n\in\N}$.
				
		\smallskip	
We begin with proving (ii). Let $\bu\in\hdotzig{\upper}$ be fixed and we prove that
		\begin{equation} \label{lim-asser-1-chi}
			\lim_{n\rightarrow \infty}  \|\bu\otimes(\bv-\bv_n)\|_\ltwo{\R^2_+} =0.
		\end{equation}
		Note that $\bv - \bv_n =(1-\eta_n)\bv - \psi \nabla^\perp(\eta_n)$ and 
		\begin{equation} \label{eqn-0303-2-chi}
			\|\bu\otimes(\bv-\bv_n)\|_\ltwo{\R^2_+}\leq\|(1-\eta_n)\bu\otimes\bv\|_\ltwo{\R^2_+}+\|\psi \bu\otimes\nabla\eta_n\|_\ltwo{\R^2_+}.
		\end{equation}
		We then treat the two terms in the right hand side of \eqref{eqn-0303-2-chi}. For the first term, we observe that 
		\begin{align} \notag
					\|\bu\otimes\bv\|_\ltwo{\R^2_+} & \leq \left(\int_{0}^{\infty}\int_{\R} |\bu(\bx)|^2|\bv(\bx)|^2\,dx_1\,dx_2\right)^{1/2} \\
			&    \leq \left(\int_{0}^{\infty} \|\bv(\cdot,x_2)\|_{\linf{\R}}^{2}\int_{\R} |\bu(\bx)|^2\,dx_1\,dx_2\right)^{1/2} \notag \\
			&    \leq \left(\int_{0}^{\infty} \|\bv(\cdot,x_2)\|_{\linf{\R}}^{2}\|\bu(\cdot,x_2)\|_{L^2(\R)}^2\,dx_2\right)^{1/2} \notag \\
				&    \leq \left(\int_{0}^{\infty} \|\bv(\cdot,x_2)\|_{\linf{\R}}^{2}(x_2^{1/2}\|\nabla\bu(\bx)\|_{L^2(\R_{+}^{2})})^2\,dx_2\right)^{1/2} \notag\\
				&\leq \|\nabla\bu(\bx)\|_{L^2(\R_{+}^{2})}\left(\int_{0}^{\infty} \|\bv(\cdot,x_2)\|_{\linf{\R}}^{2}x_2\,dx_2\right)^{1/2}  \notag\\
 & = \|\nabla\bu(\bx)\|_{L^2(\R_{+}^{2})} \| \bv\|_{\mathcal{X}} < \infty,  \label{uv-finite-chi}
						\end{align}
		where we have used  \eqref{3.3-2} of Lemma \ref{Hardy-2} in the fourth step of the previous estimate.
		Therefore, by the Lebesgue dominated convergence theorem, we obtain
		\begin{equation} \label{lim-1uv-chi}
			\lim_{n\rightarrow \infty} \|(1-\eta_n)\bu\otimes\bv\|_\ltwo{\R^2_+} =0.
		\end{equation}
On the other hand, by using Lemma \ref{lemma-stream-est-chi} instead of Lemma \ref{lemma-psi}, we can follow exactly as in the proof Lemma \ref{approximationlemma} to prove
\begin{equation}\label{lim-2uv-chi}
	\lim_{n\rightarrow \infty} \|\psi \bu\otimes\nabla\eta_n\|_\ltwo{\R^2_+} =0.
\end{equation} 
Then, the assertion \eqref{lim-asser-1-chi} follows from \eqref{lim-1uv-chi} and \eqref{lim-2uv-chi}, and the proof of (ii) is completed.

\smallskip 
Next, we prove (i), namely 
\begin{equation} \label{lim-asser-2-chi}
	\lim_{n\rightarrow \infty} \| \nabla \bv_n - \nabla \bv\|_{L^2(\R^2_+)} =0.
\end{equation}
Note that
\begin{align} \notag
	& \| \nabla \bv_n - \nabla \bv \|_{L^2(\R^2_+)} \\ \label{con-nabla-chi}
	& \quad \leq \|(1-\eta_n) \nabla \bv\|_{L^2(\R^2_+)} + 2 \|\nabla \eta_n \otimes\bv\|_{L^2(\R^2_+)} + \|\psi \nabla^2\eta_n\|_{L^2(\R^2_+)}.
\end{align}
The first two terms in \eqref{con-nabla-chi} can be proved to go to $0$ as shown in the proof of Lemma \ref{approximationlemma}. We then consider the term $\|\psi\nabla ^2\eta_n\|_{L^2(\R^2_+)}$ on the right hand side of \eqref{con-nabla}.  By \eqref{stream-est-chi} and  \eqref{eqn-0302-1-b}, it follows that that
\begin{align*} 
	&  \|\psi \nabla ^2\eta_n\|_{L^2(\R^2_+)}^2 \\  
	& \leq \frac{N}{[\ln(\ln(n+M_0)]^2} \int_{A_n}\dfrac{(1+|\ln x_2|^{1/2})^2\chi_{\{x_2\geq1\}(x_2)}}{(\ln(|\bx|+M_0))^2(|\bx|+M_0)^2|\bx|^2} d\bx \\ 
	& \quad + \frac{N}{[\ln(\ln(n+M_0)]^2}\int_{A_n}\dfrac{\chi_{\{x_2\leq1\}}}{[\ln(|\bx|+M_0)]^2(|\bx|+M_0)^2|\bx|^2} d\bx  \\
	&\quad+ N n^2\int_{A_n\cap I_n} |\psi(\bx)|^2d\bx + Nn^4 \int_{I_n}|\psi (\bx)|^2\, d\bx \\
	& = G_1  + G_2 + G_3 + G_4,
\end{align*}
where $A_n$ and $I_n$ are defined in \eqref{An-In}. We control $G_1$ by the following calculations
\begin{align*}
	G_1 &=  \frac{N}{[\ln(\ln(n+M_0)]^2}  \int_{A_n}\dfrac{(1+|\ln x_2|^{1/2})^2\chi_{\{x_2\geq1\}(x_2)}}{(\ln(|\bx|+M_0))^2(|\bx|+M_0)^2|\bx|^2} d\bx\\
	& \leq \dfrac{N}{[\ln(\ln(n+M_0)]^2}\int_{A_n}\dfrac{1}{(\ln(|\bx|+M_0))^2(|\bx|+M_0)^2|\bx|^2}d\bx\\
	& \quad+ \dfrac{N}{[\ln(\ln(n+M_0)]^2}\int_{A_n}\dfrac{1}{\ln(|\bx|+M_0)(|\bx|+M_0)^2|\bx|^2}d\bx\\
	& \leq \dfrac{N}{[\ln(\ln(n+M_0)]^2}\int_{\R^2_+}\dfrac{1}{(\ln(|\bx|+M_0))(|\bx|+M_0)^2|\bx|^2}d\bx.
\end{align*}
Similarly,
\begin{align*}
	G_2  &\leq \frac{N}{[\ln(\ln(n+M_0))]^2}\int_{\R^2_+}\dfrac{1}{(\ln(|\bx|+M_0))^2(|\bx|+M_0)^2|\bx|^2}d\bx.
\end{align*}
Therefore,
\[
\lim_{n\rightarrow \infty} [G_1 + G_2] =0.
\]
With the same calculation, we also obtain
\[
\lim_{n\rightarrow \infty} [G_3 + G_4] =0.
\]
Then, collecting the estimates for $G_k$ for $k =1,2,3, 4$, we have
\begin{equation*}  
\lim_{n\rightarrow \infty} \| \psi \nabla ^2\eta_n\|_{L^2(\R^2_+)}^2 =0,
\end{equation*}
and the assertion \eqref{lim-asser-2-chi} follows. The proof of the lemma is completed.
\end{proof}

\section{Existence and uniqueness of weak solutions} \label{existence-sec}
This section is divided into two subsections and each of which provides the proof of each of  the Theorem \ref{theorem2.1} and Theorem \ref{theorem2.2}. For convenience in writing, we use $\langle \cdot,\cdot \rangle$ to denote the inner product in Euclidean space $\R^2$. Similarly we write
\[
\langle\nabla\bu,\nabla\bphi\rangle_\ltwo{\R^2_+} = \sum_{i=1}^2\int_{\R^2_+}\nabla u_i\cdot\nabla\varphi_i\,d\bx.
\]
From this, \eqref{1.2} is rewritten as 
\begin{equation} \label{1.3}
\langle\nabla\bu,\nabla\bphi\rangle_\ltwo{\R^2_+}+\langle\bu\cdot\nabla\bu,\bphi\rangle_\ltwo{\R^2_+}= \langle\bF,\nabla\bphi\rangle_\ltwo{\R^2_+}, 
\end{equation}
for all $\bphi \in C^\infty_{c,\sigma}(\R^2_+)$.
\subsection{Existence of solutions}  This  proves Theorem \ref{theorem2.1}.  Observe that Theorem \ref{theorem2.1} could be well-known, but we prove it here for completeness. We follow the method of invasion domain introduced by J. Leray in \cite{Leray}. For each $n \in \mathbb{N}$, we consider \eqref{1.1} in the upper-half ball $B_n^+$  
\begin{equation} \label{1.1-ball}
\left\{
\begin{array}{cccl}
   - \boldsymbol\Delta\bu + (\bu\cdot\nabla) \bu + \nabla P +  \text{div}(\bF) & = &  0 & \quad \text{in} \quad B_n^+,\\
    \nabla\cdot\bu & = & 0 & \quad \text{in} \quad  B_n^+,\\
   \qquad \bu  & = & 0 & \quad \text{on} \quad  \p B_n^+.
   \end{array} \right.
 \end{equation}
We begin with the following lemma on the existence of weak solutions of \eqref{1.1-ball}.
\begin{lemma} \label{theorem4.1} For every $n \in \mathbb{N}$ and  $\bF\in\ltwo{B_n^+}^{2\times2}$, there exists a weak solution $\bu_n\in\hdotzig{\balln}$ to the Navier-Stokes equations \eqref{1.1-ball}  satisfying
\begin{equation} \label{energy-equality}
    \|\nabla \bu_n\|_\ltwo{\balln}^2  = \langle \bF, \nabla\bu_n \rangle_\ltwo{\balln}.
\end{equation}
\end{lemma}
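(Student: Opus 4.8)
The plan is to solve the finite-dimensional problem first and then pass to the limit, following the classical Galerkin method on the bounded domain $B_n^+$. Since $B_n^+$ is a bounded Lipschitz domain and the boundary data is homogeneous, the functional-analytic setting is standard: the space $H := \hdotzig{B_n^+}$ is a Hilbert space with inner product $\langle \nabla \cdot, \nabla \cdot\rangle_{L^2(B_n^+)}$, and on such a bounded domain the Poincar\'e inequality makes $\|\nabla \cdot\|_{L^2(B_n^+)}$ equivalent to the full $H^1$-norm, so $H$ embeds compactly into $L^2(B_n^+)^2$ by Rellich-Kondrachov. Because $C^\infty_{c,\sigma}(B_n^+)$ is dense in $H$ by definition, I would fix a countable basis $\{\bw_k\}_{k \in \N} \subset C^\infty_{c,\sigma}(B_n^+)$ of $H$.

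First I would set up the Galerkin approximations. For each $m$, seek $\bu^{(m)} = \sum_{k=1}^m c_k \bw_k$ in the span $H_m := \mathrm{span}\{\bw_1, \dots, \bw_m\}$ satisfying
\[
\langle \nabla \bu^{(m)}, \nabla \bw_j\rangle_{L^2(B_n^+)} + \langle \bu^{(m)} \cdot \nabla \bu^{(m)}, \bw_j\rangle_{L^2(B_n^+)} = \langle \bF, \nabla \bw_j\rangle_{L^2(B_n^+)}
\]
for $j = 1, \dots, m$. This is a finite system of quadratic equations in $(c_1, \dots, c_m)$, and I would prove solvability via a standard topological argument (Brouwer fixed point, or the Leray-Schauder degree lemma: if $P: \R^m \to \R^m$ is continuous and $\langle P(\xi), \xi\rangle > 0$ for $|\xi| = \rho$, then $P$ has a zero in the ball of radius $\rho$). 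The crucial structural fact that makes this work is that the trilinear form vanishes on the diagonal: for $\bv \in H_m \subset H$ divergence-free and compactly supported, integration by parts gives $\langle \bv \cdot \nabla \bv, \bv\rangle_{L^2(B_n^+)} = 0$. Testing the system against $\bu^{(m)}$ itself therefore kills the nonlinear term and yields the a priori bound $\|\nabla \bu^{(m)}\|_{L^2(B_n^+)}^2 = \langle \bF, \nabla \bu^{(m)}\rangle_{L^2(B_n^+)} \le \|\bF\|_{L^2(B_n^+)} \|\nabla \bu^{(m)}\|_{L^2(B_n^+)}$, so $\|\nabla \bu^{(m)}\|_{L^2(B_n^+)} \le \|\bF\|_{L^2(B_n^+)}$ uniformly in $m$.

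Next I would pass to the limit $m \to \infty$. The uniform bound gives a subsequence with $\bu^{(m)} \rightharpoonup \bu_n$ weakly in $H$, and by the compact embedding $H \hookrightarrow L^2(B_n^+)^2$ the convergence is strong in $L^2(B_n^+)$ (and, since the domain is bounded and two-dimensional, also in $L^4(B_n^+)$ by interpolation or by compactness of $H^1 \hookrightarrow L^4$ in 2D). The linear and forcing terms pass to the limit by weak convergence of the gradients; for the nonlinear term I would use the strong $L^2$/$L^4$ convergence to handle $\langle \bu^{(m)} \cdot \nabla \bu^{(m)}, \bw_j\rangle$, writing it after integration by parts as $-\langle \bu^{(m)} \otimes \bu^{(m)}, \nabla \bw_j\rangle$ so that only products of the functions (not their gradients) appear, and then the product $\bu^{(m)} \otimes \bu^{(m)} \to \bu_n \otimes \bu_n$ in $L^2$. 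This yields the weak formulation \eqref{1.3} for every fixed $\bw_j$, hence for all $\bphi \in C^\infty_{c,\sigma}(B_n^+)$ by density. Finally, the energy identity \eqref{energy-equality} is obtained by testing the limiting weak formulation with $\bu_n$ (legitimate here because $\bu_n \in \hdotzig{B_n^+}$ can itself be approximated by $C^\infty_{c,\sigma}(B_n^+)$ functions on which the nonlinear term vanishes), giving exactly $\|\nabla \bu_n\|_{L^2(B_n^+)}^2 = \langle \bF, \nabla \bu_n\rangle_{L^2(B_n^+)}$.

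The main obstacle, and where care is genuinely needed, is the treatment of the nonlinear convection term in the limit passage: one must ensure the trilinear form is continuous in the topology provided by the a priori estimates, which on a bounded 2D domain rests on the Ladyzhenskaya/Gagliardo-Nirenberg inequality $\|\bv\|_{L^4(B_n^+)} \lesssim \|\bv\|_{L^2(B_n^+)}^{1/2}\|\nabla\bv\|_{L^2(B_n^+)}^{1/2}$ together with the compact embedding; everything else is routine because the domain is bounded and the estimates are uniform. It is worth noting that no smallness on $\bF$ is required at this stage, precisely because the antisymmetry of the convection term makes the energy estimate unconditional.
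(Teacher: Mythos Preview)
Your argument is correct. The Galerkin scheme, the a priori bound via antisymmetry of the trilinear form, the compactness of $H^1_0 \hookrightarrow L^4$ on the bounded 2D domain, and the passage to the limit are all sound, and testing the limiting equation with $\bu_n$ legitimately recovers the energy equality.

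The paper takes a different (equally classical) route: it recasts \eqref{4.1} as a fixed-point equation $\bu_n = -T(\bu_n) + \bg$ in the Hilbert space $\hdotzig{B_n^+}$, where $\bg$ and the bilinear map $G$ are obtained by Riesz representation and $T(\bPhi) = -G(\bPhi,\bPhi)$; compactness of $T$ follows from the same embedding $\hdotzig{B_n^+} \hookrightarrow L^4(B_n^+)^2$ that you use, and the Leray--Schauder fixed-point theorem is applied with the homotopy $T_\lambda(\bv) = -\lambda(T(\bv)-\bg)$, the a priori bound along the homotopy coming from the same antisymmetry identity. Both proofs rest on the identical structural ingredients (antisymmetry of the convection term and compactness into $L^4$); the Leray--Schauder approach packages the finite-dimensional step into an abstract fixed-point theorem, while your Galerkin argument is more constructive and makes the energy equality at each approximation level explicit before the limit.
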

\begin{proof}  The proof is standard, see \cite{Lad} for instance. We provide the details for completeness. We will show that there is $\bu_n\in\hdotzig{\balln}$ satisfying
\begin{equation} \label{4.1}
\langle\nabla\bu_n,\nabla\bphi\rangle_\ltwo{\balln}+\langle\bu_n\cdot\nabla\bu_n,\bphi\rangle_\ltwo{\balln}=\langle\bF,\nabla\bphi\rangle_\ltwo{\balln},
\end{equation}
for all $\bphi \in \hdotzig{\balln}$. We use the Leray-Schauder fixed point theorem (see \cite[Theorem 10.6 p. 228]{gilbarg}, for example).  To this end, we look for $\bu_n \in \hdotzig{\balln}$ as a fixed point of an equation, which is equivalent to \eqref{4.1}.

We first note that as $B_n^+$ is bounded, due to the Poincare's inequality, the space $\hdotzig{\balln}$ is the Hilbert space with the inner product
\[
\langle \bu,  \bv  \rangle_\hdotzig{\balln} = \int_{B_n^+} \langle \nabla \bu(\bx),   \nabla \bv(\bx)\rangle d\bx, \quad \forall \bu, \bv \in \hdotzig{\balln}.
\]
Moreover, note that
\begin{align*}
    \left|\langle\bF,\nabla\bphi\rangle_\ltwo{\balln}\right| & \leq \|\bF\|_\ltwo{\balln}\|\nabla\bphi\|_\ltwo{\balln} \\
&  = \|\bF\|_\ltwo{\balln} \|\bphi\|_\hdotzig{\balln},
\end{align*}
for any $\bphi \in \hdotzig{\balln}$. Then, by the Riesz representation theorem, there exists a unique $\bg \in\hdotzig{\balln}$ such that 
\begin{align*}
    \langle\bg ,\bphi\rangle_\hdotzig{\balln}=\langle\bF,\nabla\bphi\rangle_\ltwo{\balln}, \quad \bphi\in\hdotzig{\balln}.
\end{align*}

\smallskip
Next, note that for given $\bPhi,\bpsi,  \in \lfour{\balln}^2$, we have
\[
\begin{split}
\left|\langle\bPhi\cdot\nabla\bphi,\bpsi\rangle_\ltwo{\balln}\right| &\leq\|\bPhi\otimes\bpsi\|_\ltwo{\balln}\|\nabla\bphi\|_\ltwo{\balln}\\
         &\leq\|\bPhi\|_\lfour{\balln}\|\bpsi\|_\lfour{\balln}\|\bphi\|_\hdotzig{\balln}, 
 \end{split}
\]
for any $\bphi \in \hdotzig{\balln}$.
Then, by the Riesz representation theorem again, there is a bilinear bounded map
\[
G: \lfour{\balln}^2 \times \lfour{\balln}^2 \rightarrow \hdotzig{\balln}
\]
 defined by
\begin{align*}
    \langle  G(\bPhi,\bpsi),\bphi\rangle_\hdotzig{\balln} & = \langle\bPhi\cdot\nabla\bphi,\bpsi\rangle_\ltwo{\balln}, \quad \bphi \in \hdotzig{\balln}.
\end{align*}
Now, let $T:\hdotzig{\balln}\rightarrow\hdotzig{\balln}$ defined by 
\[
T(\bPhi):=-G(\bPhi,\bPhi), \quad \bPhi \in \hdotzig{\balln}.
\]
Observe that by the integration by parts, we also have
\begin{equation} \label{T-def}
 \langle  T(\bPhi),\bphi\rangle_\hdotzig{\balln} =\langle\bPhi\cdot\nabla\bPhi,\bphi\rangle_\ltwo{\balln}, \quad \bphi \in \hdotzig{\balln}.
\end{equation}
On the other hand, as $\hdotzig{\balln}$ is compactly embedded in $\lfour{\balln}^2$. We see that the map $T:\hdotzig{\balln}\rightarrow\hdotzig{\balln}$ is compact.

Now, note that \eqref{4.1} is equivalent to
\begin{equation} \label{n-eqn-0304}
    \langle\bu_n+T(\bu_n)-\bg,\bphi\rangle_\hdotzig{\balln}=0, \quad \bphi \in \hdotzig{\balln}
\end{equation}
and we use the Leray-Schauder fixed point theorem to show that there is a solution $\bu_n \in \hdotzig{\balln}$ to the equation  \eqref{n-eqn-0304}. 

For each $\lambda \in [0,1]$, let $ T_\lambda  :  \hdotzig{\balln}\times[0,1]\rightarrow \hdotzig{\balln}$ be defined by
\[
T_{\lambda}(\bv) = -\lambda(T(\bv)-\bg), \quad \bv \in \hdotzig{\balln}.
\]
We claim the following:
\begin{itemize}
\item[\text{(i)}] $T_\lambda$ is a compact map.
\item[\text{(ii)}]  There is $M>0$ such that $\|\bv\|_\hdotzig{\balln}\leq M$ whenever $T_{\lambda}(\bv)=\bv$ for some $\lambda\in[0,1]$.
\item[\text{(iii)}]  $T_0(\bv)=0$  for all  $\bv\in\hdotzig{\balln}$.
\end{itemize}

\smallskip
Indeed, we see that (i) follows directly from the fact that $T$ is a compact map from $\hdotzig{\balln}$ to $\hdotzig{\balln}$, and (iii) is trivial. Hence,  it remains to prove (ii). To this end,  let $\bv \in \hdotzig{\balln}$ such that 
\[ T_{\lambda}(\bv)=\bv \quad \text{for some} \quad \lambda \in [0,1].
\] 
Then, 
\begin{align*}
     -\lambda T(\bv)+\lambda \bg=\bv
\end{align*}
and therefore
\[
    -\lambda\langle T (\bv),\bv\rangle_\hdotzig{\balln}+ \lambda\langle\bg,\bv\rangle_\hdotzig{\balln}=\langle\bv,\bv \rangle_\hdotzig{\balln}.
\]
From this, it follows from the definition of $T$, $\bg$ and by \eqref{T-def}  that 
\begin{align*}
    \|\nabla\bv\|^2_\ltwo{\balln}+\lambda\langle\bv\cdot\nabla\bv,\bv\rangle_\ltwo{\balln}=\lambda\langle\bF,\nabla\bv\rangle_\ltwo{\balln}.
\end{align*}
As $\langle\bv\cdot\nabla\bv,\bv\rangle_\ltwo{\balln} =0$  by the integration by parts, we infer that 
\begin{align}  \label{energy-equality-lambda}
   & \|\nabla\bv\|^2_\ltwo{\balln} =\lambda\langle\bF,\nabla\bv\rangle_\ltwo{\balln} \\ \notag
    & \leq\|\bF\|_\ltwo{\balln}\|\nabla\bv\|_\ltwo{\balln} = \|\bF\|_{L^2(B_n^+)}\|\bv\|_{\hdotzig{\balln}}.
\end{align}
Hence, $\|\bv\|_\hdotzig{\balln}=\|\nabla\bv\|_\ltwo{\balln}\leq\|\bF\|_{L^2(B_n^+)}=M$.

\smallskip
By the claims (i)-(ii)-(iii), we apply the Leray-Schauder fixed point theorem to infer that there is $\bu_n \in \hdotzig{\balln}$ such that
\begin{equation*}
    T_{1}(\bu_n)=-(T(\bu_n)-\bg)=\bu_n.
\end{equation*}
This implies that
\begin{align*}
    &\langle\nabla\bu_n,\nabla\bphi\rangle_\ltwo{\balln}+\langle\bu_n\cdot\nabla\bu_n,\bphi\rangle_\ltwo{\balln}=\langle\bF,\nabla\bphi\rangle_\ltwo{\balln},    
\end{align*}
for all $\varphi \in C_{c,\sigma}^\infty(B_n^+)$.
Moreover, we see that \eqref{energy-equality} holds by setting $\lambda=1$ in \eqref{energy-equality-lambda} with $\bv$ replaced by $\bu_n$. The proof is completed.
\end{proof}
\begin{proof}[\textbf{Proof of  Theorem \ref{theorem2.1}}] Observe that by Lemma \ref{theorem4.1}, there exists a weak solution $\bu_n\in\hdotzig{\balln}$ to the stationary Navier-Stokes equations \eqref{1.1-ball} satisfying
\begin{equation} \label{enr-eqn-0321}
    \|\nabla\bu_n\|_\ltwo{\balln}^2 = \langle\nabla\bu,\bF\rangle_\ltwo{ \balln}, \quad \forall n \in \mathbb{N}.
\end{equation}
From this, and H\"{o}lder's inequality, we obtain
\[
\| \nabla \bu_n\|_\ltwo{B_n^+} \leq \|\bF\|_\ltwo{ \balln}, \quad \forall n \in \mathbb{N}.
\]
Then, extending  $\bu_n$ to $\upper$  by setting $\bu_n=0$ in $\upper\setminus\balln$, we see  that $\bu_n\in\hdotzig{\upper}$ and 
\begin{equation} \label{bound-u-320}
\|\nabla\bu_n\|_{L^2(\R^2_+)} \leq \|\bF\|_\ltwo{\R^2_+}, \quad \forall \ n \in \mathbb{N}.
\end{equation}
Due to Lemma \ref{Hardy}, we note that  $\hdotzig{\upper}$ is the Hilbert space with the inner product
\[
\langle\bPhi,\bphi\rangle_\hdotzig{\upper} = \int_{\R^2_+} \langle\nabla\bPhi (\bx),\nabla\bphi (\bx) \rangle \, d\bx , \quad \forall \ \bPhi, \bphi \in \hdotzig{\upper}.
\]
Hence, there exists a subsequence $(\bu_{n_k})_{k\in\N}$ of $(\bu_n)_{n\in\N}$ such that 
\begin{equation} \label{weak-con-0320}
\bu_{n_k}\rightharpoonup\bu \quad \text{in}  \quad \hdotzig{\upper} \quad \text{as} \quad  k\rightarrow\infty.
\end{equation}
 Observe that by \eqref{enr-eqn-0321} and the weak convergence \eqref{weak-con-0320}, we have
\begin{align*}
    \|\nabla\bu\|_\ltwo{\upper}^2&\leq\liminf_{k\rightarrow\infty}\|\nabla\bu_{n_k}\|_\ltwo{\upper}^2 =\langle\nabla\bu,\bF\rangle_\ltwo{\R^2_+}.
\end{align*}
Then, for $\omega \in \{\omega_0, \omega_1, \omega_2\}$, it follows from Lemma \ref{Hardy} that
\[
 \|\bu\|_\ltwo{\upper, \omega^{-1}} \leq 4 \|\nabla \bu\|_{L^2(\R^2_+)} \leq 4\|\bF\|_\ltwo{\upper}.
\]
Thus, \eqref{2.1} and \eqref{est-0321} hold, and it remains to show is that  $\bu$ is a weak solution to the stationary Navier-Stokes equations \eqref{1.1} in $\upper$, or equivalently,   
\begin{equation} \label{wea-def-u-0320}
\langle\nabla\bu,\nabla\bphi\rangle_\ltwo{\R^2_+}+\langle\bu\cdot\nabla\bu,\bphi\rangle_\ltwo{\R^2_+}=\langle\bF,\nabla\bphi\rangle_\ltwo{\R^2_+}
\end{equation}
for all $\bphi\in\comsig{\upper}$.  

Consider a fixed $\bphi\in\comsig{\upper}$, and let $m\in\N$ be sufficiently large such that the support of $\bphi$ is contained in $\ballm$. 
Observe that for $k \geq m$, as $\bu_{n_k}$ is a weak solution to  \eqref{1.1-ball} in $B_{n_k}^+$ and $\bphi \in C_{c,\sigma}^\infty(B_m^+)$, we see that
\begin{equation} \label{n-k.eqn}
    \langle\nabla\bu_{n_k},\nabla\bphi\rangle_\ltwo{\ballm}+\langle\bu_{n_k}\cdot\nabla\bu_{n_k},\bphi\rangle_\ltwo{\ballm}=\langle\bF,\nabla\bphi\rangle_\ltwo{\ballm}.
\end{equation}
Now, by \eqref{bound-u-320}, the condition $\bu_{n_k} =0 $ on $B_{n_k} \cap \{x_2 =0\}$, and Poincar\'{e}'s inequality, we see 
\[
\| \bu_{n_k}\|_{H^1(B_m^+)} \leq N(m) \|\bF\|_{L^2(\R^2_+)}, \quad \forall \ k \geq m.
\]
Then, by using \eqref{weak-con-0320} and the fact that $H^1(B_m^+)$ is compactly embedded in $\lfour{\ballm}$, there exists a subsequence of $(\bu_{n_k})_{k\in\N}$, which we still denote $(\bu_{n_k})_{k\in\N}$, such that 
\begin{equation} \label{L4-con} 
\bu_{n_k}\rightarrow\bu \quad \text{in} \quad \lfour{\ballm}  \quad \text{as} \quad k\rightarrow\infty. 
\end{equation}
Using the weak convergence \eqref{weak-con-0320} , we observe that
\[ 
    \lim_{k\rightarrow\infty}\langle\nabla\bu_{n_k},\nabla\bphi\rangle_\ltwo{\ballm} =\langle\bu,\bphi\rangle_\ltwo{\ballm}.
\]
On the other hand, for the second term on the left hand side of \eqref{n-k.eqn},  we note that 
\begin{align*}
    &\left|\langle\bu_{n_k}\cdot\nabla\bu_{n_k},\bphi\rangle_\ltwo{\ballm}-\langle\bu\cdot\nabla\bu,\bphi\rangle_\ltwo{\ballm}\right|\\
    &\leq\left|\langle(\bu_{n_k}-\bu)\cdot\nabla\bu_{n_k},\bphi\rangle_\ltwo{\ballm}\right|+\left|\langle\bu\cdot\nabla(\bu_{n_k}-\bu),\bphi\rangle_\ltwo{\ballm}\right|\\
    &\leq\left|\langle(\bu_{n_k}-\bu)\cdot\nabla\bu_{n_k},\bphi\rangle_\ltwo{\ballm}\right|+\left|\langle\bu\cdot\nabla\bphi,(\bu_{n_k}-\bu)\rangle_\ltwo{\ballm}\right|\\
    &\leq\|(\bu_{n_k}-\bu)\bphi\|_\ltwo{\ballm}\|\nabla\bu_{n_k}\|_\ltwo{\ballm} \\
    &\qquad +\|\bu(\bu_{n_k}-\bu)\|_\ltwo{\ballm}\|\nabla\bphi\|_\ltwo{\ballm}\\
    &\leq\|\bu_{n_k}-\bu\|_\lfour{\ballm}\|\bphi\|_\lfour{\ballm}\|\nabla\bu_{n_k}\|_\ltwo{\ballm}\\
    &\quad+\|\bu\|_\lfour{\ballm}\|\bu_{n_k}-\bu\|_\lfour{\ballm}\|\nabla\bphi\|_\ltwo{\ballm}\\
    &=\left(\|\bphi\|_\lfour{\ballm}\|\bF\|_\ltwo{\R^2_+}+\|\bu\|_\lfour{\ballm}\|\nabla\bphi\|_\ltwo{\ballm}\right)\|\bu_{n_k}-\bu\|_\lfour{\ballm},
\end{align*}
where we have used \eqref{bound-u-320} in the last step of the above estimate. Then, from \eqref{L4-con}, we conclude that 
\begin{align*}
    \lim_{k\rightarrow\infty}\langle\bu_{n_k}\cdot\nabla\bu_{n_k},\bphi\rangle_\ltwo{\ballm}=\langle\bu\cdot\nabla\bu,\bphi\rangle_\ltwo{\ballm}.
\end{align*}
Thus, by passing the limit as $k \rightarrow \infty$, we conclude from \eqref{n-k.eqn} that 
\begin{align*}
    \langle\nabla\bu,\nabla\bphi\rangle_\ltwo{\ballm}+\langle\bu\cdot\nabla\bu,\bphi\rangle_\ltwo{\ballm}=\langle\bF,\nabla\bphi\rangle_\ltwo{\ballm},
\end{align*}
which implies \eqref{wea-def-u-0320}, and the proof is completed.
\end{proof}

\subsection{Weak-strong uniqueness of solutions}
This subsection proves Theorem \ref{theorem2.2}.  We begin with the following lemma which is needed for the proof.
\begin{lemma} \label{lemma5.4} Assume that  $\omega \in \{\omega_0, \omega_1, \omega_2\}$. Then, the following statements hold true.
\begin{itemize}
\item[\textup{(i)}] If $\bv\in\hdotzig{\upper}\cap[\linf{\R^2_+,\omega^{1/2}}\cup\mathcal X]$ and $\bu\in\hdotzig{\upper}$, then 
\[
    \langle\bu\cdot\nabla\bv,\bv\rangle_\ltwo{\upper}=0.   
\]
\item[\textup{(ii)}] If $\bu\in\hdotzig{\upper}$ is a weak solution to the stationary Navier-Stokes equations \eqref{1.1} in $\upper$, then 
	\[
\langle\nabla\bu,\nabla\bv\rangle_\ltwo{\upper}+\langle\bu\cdot\nabla\bu,\bv\rangle_\ltwo{\upper}=\langle\bF,\nabla\bv\rangle_\ltwo{\upper}.
	\]
	for every $\bv\in\hdotzig{\upper}\cap[\linf{\R^2_+,\omega^{1/2}}\cup\mathcal X]$.
	\end{itemize}
\end{lemma}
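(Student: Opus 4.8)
The plan is to prove both statements by approximation, using the two density lemmas proved above. Given $\bv\in\hdotzig{\upper}\cap[\linf{\R^2_+,\omega^{1/2}}\cup\mathcal{X}]$, I would fix an approximating sequence $\{\bv_n\}\subset\comsig{\upper}$ chosen from Lemma \ref{approximationlemma} when $\bv\in\linf{\R^2_+,\omega^{1/2}}$ and from Lemma \ref{approximationlemma2} when $\bv\in\mathcal{X}$. In either case one has $\nabla\bv_n\to\nabla\bv$ in $\ltwo{\upper}$ and, for every fixed $\bu\in\hdotzig{\upper}$, $\bu\otimes\bv_n\to\bu\otimes\bv$ in $\ltwo{\upper}$. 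The one algebraic estimate used repeatedly is $|\langle\bu\cdot\nabla\bv,\bw\rangle_\ltwo{\upper}|\le\|\bu\otimes\bw\|_\ltwo{\upper}\|\nabla\bv\|_\ltwo{\upper}$, valid for all vector fields $\bu,\bv,\bw$, which follows from the identity $\sum_{i,j}|u_j|^2|w_i|^2=|\bu|^2|\bw|^2$ and the Cauchy--Schwarz inequality. I would also note at the outset, via \eqref{uv-finite-linfty} and \eqref{uv-finite-chi}, that $\bu\otimes\bv\in\ltwo{\upper}$, so that every pairing in the statement is finite.

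For (i), I would first establish the cancellation $\langle\bu\cdot\nabla\bv_n,\bv_n\rangle_\ltwo{\upper}=0$ for each $n$. Since $\bv_n$ is smooth and compactly supported, $\langle\bu\cdot\nabla\bv_n,\bv_n\rangle_\ltwo{\upper}=\tfrac12\int_{\upper}\bu\cdot\nabla(|\bv_n|^2)\,d\bx$; as $|\bv_n|^2\in\com{\upper}$ and $\bu$ is divergence free in the sense of distributions, this integral vanishes. Then, splitting
\[
\langle\bu\cdot\nabla\bv_n,\bv_n\rangle_\ltwo{\upper}-\langle\bu\cdot\nabla\bv,\bv\rangle_\ltwo{\upper}=\langle\bu\cdot\nabla(\bv_n-\bv),\bv_n\rangle_\ltwo{\upper}+\langle\bu\cdot\nabla\bv,\bv_n-\bv\rangle_\ltwo{\upper},
\]
I would bound the first term by $\|\bu\otimes\bv_n\|_\ltwo{\upper}\|\nabla(\bv_n-\bv)\|_\ltwo{\upper}$, in which the first factor stays bounded and the second tends to $0$, and the second term by $\|\bu\otimes(\bv_n-\bv)\|_\ltwo{\upper}\|\nabla\bv\|_\ltwo{\upper}$, in which the first factor tends to $0$ by property (ii) and the second is finite. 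Letting $n\to\infty$ then gives $\langle\bu\cdot\nabla\bv,\bv\rangle_\ltwo{\upper}=0$.

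For (ii), I would apply the weak formulation \eqref{1.3} to the admissible test functions $\bv_n\in\comsig{\upper}$, obtaining $\langle\nabla\bu,\nabla\bv_n\rangle_\ltwo{\upper}+\langle\bu\cdot\nabla\bu,\bv_n\rangle_\ltwo{\upper}=\langle\bF,\nabla\bv_n\rangle_\ltwo{\upper}$, and pass to the limit. The two linear terms converge because $|\langle\nabla\bu,\nabla(\bv_n-\bv)\rangle_\ltwo{\upper}|\le\|\nabla\bu\|_\ltwo{\upper}\|\nabla(\bv_n-\bv)\|_\ltwo{\upper}$ and $|\langle\bF,\nabla(\bv_n-\bv)\rangle_\ltwo{\upper}|\le\|\bF\|_\ltwo{\upper}\|\nabla(\bv_n-\bv)\|_\ltwo{\upper}$, both tending to $0$. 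For the convective term I would use $|\langle\bu\cdot\nabla\bu,\bv_n-\bv\rangle_\ltwo{\upper}|\le\|\bu\otimes(\bv_n-\bv)\|_\ltwo{\upper}\|\nabla\bu\|_\ltwo{\upper}\to0$, which yields the claimed identity.

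The only genuinely delicate ingredient is property (ii) of the density lemmas, namely the convergence $\bu\otimes\bv_n\to\bu\otimes\bv$ in $\ltwo{\upper}$; this is precisely what forces the convective pairings to converge, while everything else reduces to Cauchy--Schwarz. I therefore expect no substantive obstacle beyond choosing the correct density lemma for each of the two defining conditions on $\bv$ and recording that all the relevant products lie in $\ltwo{\upper}$.
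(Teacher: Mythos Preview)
Your proof is correct and uses the same underlying machinery as the paper --- the density Lemmas \ref{approximationlemma} and \ref{approximationlemma2} together with the basic bound $|\langle\bu\cdot\nabla\bv,\bw\rangle_{\ltwo{\upper}}|\le\|\bu\otimes\bw\|_{\ltwo{\upper}}\|\nabla\bv\|_{\ltwo{\upper}}$ --- and for part (ii) the arguments are essentially identical. For part (i) there is a minor organizational difference: you approximate \emph{both} copies of $\bv$, use the elementary cancellation $\langle\bu\cdot\nabla\bv_n,\bv_n\rangle_{\ltwo{\upper}}=0$ (which is fine since $|\bv_n|^2\in\com{\upper}$ and $\nabla\cdot\bu=0$ distributionally), and then pass to the limit; the paper instead approximates only one copy, invokes the integration-by-parts identity $\langle\bu\cdot\nabla\bv,\bv_n\rangle_{\ltwo{\upper}}=-\langle\bu\cdot\nabla\bv_n,\bv\rangle_{\ltwo{\upper}}$, and in the limit obtains that the quantity equals its own negative. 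Your route is arguably more direct, while the paper's avoids needing the boundedness of $\|\bu\otimes\bv_n\|_{\ltwo{\upper}}$ (which you correctly deduce from the convergence $\bu\otimes\bv_n\to\bu\otimes\bv$); both are equally valid and neither offers a substantive advantage.
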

\begin{proof} We start with proving (i). We first observe that as $\bv\in\hdotzig{\upper}\cap[\linf{\R^2_+,\omega^{1/2}}\cup\mathcal X]$, it follows from H\"{o}lder's inequality, \eqref{uv-finite-linfty}, and \eqref{uv-finite-chi} that
\[
\begin{split}
|\langle\bu\cdot\nabla\bv,\bv\rangle_\ltwo{\upper}| \leq \|\nabla\bv\|_{L^2{(\R^2_+)}} \|\bu\otimes\bv\|_\ltwo{\R^2_+}
 <\infty.
\end{split}
\]
Now, according to Lemma \ref{approximationlemma} and Lemma \ref{approximationlemma2}, there exists $(\bv_n)_{n\in\N}\subset\comsig{\upper}$ such that
\begin{equation} \label{apprlemmaresult}
	\|\nabla (\bv-\bv_n) \|_{L^2(\R^2_+)} \rightarrow 0 \quad \text{and} \quad  \|\bu\otimes(\bv-\bv_n)\|_\ltwo{\upper} \rightarrow 0
\end{equation}
as $n \rightarrow \infty$. We integrate by parts, which results in
\begin{equation} \label{5.4}
     \langle\bu\cdot\nabla\bv,\bv_n\rangle_\ltwo{\upper} =-\langle\bu\cdot\nabla\bv_n,\bv\rangle_\ltwo{\upper}.
\end{equation}
Then, with H\"{o}lder's inequality, it follows that
\begin{align*}
    \left|\langle\bu\cdot\nabla\bv,\bv-\bv_n\rangle_\ltwo{\upper}\right|\leq\|\nabla\bv\|_\ltwo{\upper}\|\bu\otimes(\bv-\bv_n)\|_\ltwo{\upper}
\end{align*}
and 
\begin{align*}
    \left|\langle\bu\cdot\nabla(\bv-\bv_n),\bv\rangle_\ltwo{\upper}\right|&\leq\|\nabla(\bv_n-\bv)\|_\ltwo{\upper}\|\bu\otimes\bv\|_\ltwo{\upper}\\
   &=\|\bv_n-\bv\|_\hdotzig{\upper}\|\bu\otimes\bv\|_\ltwo{\upper}.
\end{align*}
Then, by using \eqref{apprlemmaresult}, we pass the limit as $n \rightarrow \infty$ of \eqref{5.4}  to get
\[
 \langle\bu\cdot\nabla\bv,\bv\rangle_\ltwo{\upper} =- \langle\bu\cdot\nabla\bv,\bv\rangle_\ltwo{\upper} .
\]
Hence, $\langle\bu\cdot\nabla\bv,\bv\rangle_\ltwo{\upper}  =0$ and the assertion (i) is proved.

Next, we prove (ii). Let $\bu\in\hdotzig{\upper}$ be a weak solution to the Navier-Stokes equations \eqref{1.1} in $\upper$, and let $\bv\in\hdotzig{\upper}\cap[\linf{\R^2_+,\omega^{1/2}}\cup\mathcal X]$. As in the proof of (i), let $(\bv_n)_{n\in\N}\subset\comsig{\upper}$ satisfy \eqref{apprlemmaresult}, whose existence is ensured by Lemmas \ref{approximationlemma}, \ref{approximationlemma2}. Then, because $\bu$ is a weak solution to \eqref{1.1}, we have 
	\begin{align*}
		\langle\nabla\bu,\nabla\bv_n\rangle_\ltwo{\upper}+\langle\bu\cdot\nabla\bu,\bv_n\rangle_\ltwo{\upper}=\langle\bF,\nabla\bv_n\rangle_\ltwo{\upper},
	\end{align*}
for all $n \in \mathbb{N}$. Since $\bv_n\rightarrow\bv$ in $\hdotzig{\upper}$ as in  \eqref{apprlemmaresult} when $n\rightarrow\infty$, we have
\[
\lim_{n\rightarrow \infty} \langle\nabla\bu,\nabla\bv_n\rangle_\ltwo{\upper} = \langle\nabla\bu,\nabla\bv\rangle_\ltwo{\upper}
\]
and
\[
\lim_{n\rightarrow \infty}\langle\bF,\nabla\bv_n\rangle_\ltwo{\upper} = 
\langle\bF,\nabla\bv\rangle_\ltwo{\upper}.
\]	
On the other hand, we also have
	\begin{align*}
		&  \left|\langle\bu\cdot\nabla\bu,\bv\rangle_\ltwo{\upper}-\langle\bu\cdot\nabla\bu,\bv_n\rangle_\ltwo{\upper}\right|\\
		&=\left|\langle\bu\cdot\nabla\bu,(\bv-\bv_n)\rangle_\ltwo{\upper}\right|\\
		  &\leq\|\nabla\bu\|_\ltwo{\upper}\|\bu\otimes(\bv-\bv_n)\|_\ltwo{\upper} \rightarrow 0
	\end{align*}
 as $n\rightarrow\infty$ by \eqref{apprlemmaresult}. Therefore, the assertion (ii) follows, and the proof of the lemma is completed.
\end{proof}
 
The next lemma will also be needed for our proof of the weak-strong uniqueness of solutions.
\begin{lemma} \label{lemma5.3} Let  $\bu \in\hdotzig{\R^2_+}$ be a weak solution to the stationary Navier-Stokes equations \eqref{1.1}. Assume that $\bu \in \linf{\R^2_+,\omega^{1/2}}\cup\mathcal X$ for some $\omega \in \{\omega_0, \omega_1, \omega_2\}$. Then
\[
    \langle\nabla \bu,\nabla\bphi\rangle_\ltwo{\upper}-\langle \bu \cdot\nabla\bphi, \bu\rangle_\ltwo{\upper}=\langle\bF,\nabla\bphi\rangle_\ltwo{\upper}
\]
for all $\bphi \in \hdotzig{\R^2_+}$.
\end{lemma}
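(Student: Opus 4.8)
The plan is to establish the identity first for the smooth compactly supported divergence-free test functions appearing in the definition of a weak solution, and then to extend it to all of $\hdotzig{\upper}$ by density, the only nontrivial point being the continuity of the nonlinear term in the $\hdotzig{\upper}$ seminorm.

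First I would take $\bphi \in \comsig{\upper}$ and use the weak formulation \eqref{1.3}. Since $\bphi$ is compactly supported in the open set $\upper$, on a neighborhood of its support we have $\bu \in H^1$ (from $\nabla \bu \in \ltwo{\upper}$ together with the Hardy inequality of Lemma \ref{Hardy}), hence $\bu \in L^4$ there by the two-dimensional Sobolev embedding. A standard mollification argument, using $\nabla \cdot \bu = 0$, then justifies the integration by parts
\[
\langle \bu \cdot \nabla \bu, \bphi \rangle_\ltwo{\upper} = -\langle \bu \cdot \nabla \bphi, \bu \rangle_\ltwo{\upper},
\]
so that \eqref{1.3} becomes
\[
\langle \nabla \bu, \nabla \bphi \rangle_\ltwo{\upper} - \langle \bu \cdot \nabla \bphi, \bu \rangle_\ltwo{\upper} = \langle \bF, \nabla \bphi \rangle_\ltwo{\upper}, \qquad \forall \, \bphi \in \comsig{\upper}.
\]

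Next I would check that each of the three terms extends to a bounded linear functional of $\nabla \bphi \in \ltwo{\upper}$. For the first and third this is immediate from the Cauchy--Schwarz inequality since $\nabla \bu, \bF \in \ltwo{\upper}$. For the quadratic middle term the hypothesis on $\bu$ is exactly what is needed: if $\bu \in \linf{\R^2_+, \omega^{1/2}}$ then \eqref{uv-finite-linfty} with $\bv = \bu$ gives $\|\bu \otimes \bu\|_\ltwo{\upper} < \infty$, while if $\bu \in \mathcal{X}$ the same conclusion follows from \eqref{uv-finite-chi} with $\bv = \bu$. In either case
\[
\big|\langle \bu \cdot \nabla \bphi, \bu \rangle_\ltwo{\upper}\big| \leq \|\bu \otimes \bu\|_\ltwo{\upper} \, \|\nabla \bphi\|_\ltwo{\upper},
\]
which is the required continuity.

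Finally, since $\hdotzig{\upper}$ is by definition the completion of $\comsig{\upper}$ with respect to $\|\nabla \cdot\|_\ltwo{\upper}$, for an arbitrary $\bphi \in \hdotzig{\upper}$ I would choose $\bphi_n \in \comsig{\upper}$ with $\|\nabla(\bphi_n - \bphi)\|_\ltwo{\upper} \to 0$ and pass to the limit in the identity above; the boundedness just established guarantees that each term converges to its counterpart with $\bphi$, yielding the claim. The hard part is entirely concentrated in the nonlinear term: for a generic $\bu \in \hdotzig{\upper}$ the functional $\bphi \mapsto \langle \bu \cdot \nabla \bphi, \bu \rangle_\ltwo{\upper}$ need not even be well-defined, and it is precisely the weighted-$L^\infty$ or anisotropic mixed-norm control on $\bu$ --- through $\bu \otimes \bu \in \ltwo{\upper}$ --- that rescues it.
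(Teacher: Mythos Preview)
Your proposal is correct and follows essentially the same route as the paper: establish the identity for $\bphi\in\comsig{\upper}$ via the weak formulation and integration by parts, then extend by density in $\hdotzig{\upper}$ using $\|\bu\otimes\bu\|_{\ltwo{\upper}}<\infty$ from \eqref{uv-finite-linfty} or \eqref{uv-finite-chi}. If anything, you are slightly more careful than the paper in justifying the integration by parts step via local $H^1$/$L^4$ regularity.
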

\begin{proof} As $\bphi \in\hdotzig{\R^2_+}$, we can find  a sequence $(\bphi_n)_{n\in\N}\subset\comsig{\upper}$ such that
\begin{equation} \label{eqn:3-20}
\lim_{n\rightarrow \infty}\|\nabla \bphi_n - \nabla \bphi\|_{L^2(\R^2_+)} =0. 
\end{equation}
Since $\bu \in \hdotzig{\R^2_+}$ is a weak solution to \eqref{1.1}, we have 
\[
\langle\nabla \bu,\nabla\bphi_n\rangle_\ltwo{\upper}+\langle \bu \cdot\nabla \bu,\bphi_n\rangle_\ltwo{\upper} =\langle\bF,\nabla\bphi_n\rangle_\ltwo{\upper}.
\]
Then, we integrate by parts to give 
\begin{equation} \label{5.3-1}
    \langle\nabla \bu,\nabla\bphi_n\rangle_\ltwo{\upper}-\langle \bu\cdot\nabla\bphi_n, \bu\rangle_{L^2(\R^2_+)}=\langle\bF,\nabla\bphi_n\rangle_\ltwo{\upper}.
\end{equation}
It follows directly from \eqref{eqn:3-20} that
\[
\begin{split}
& \lim_{n\rightarrow \infty} \langle\nabla \bu,\nabla\bphi_n\rangle_\ltwo{\upper} = \langle\nabla \bu,\nabla\bphi\rangle_\ltwo{\upper} \quad \text{and} \\
& \lim_{n\rightarrow \infty} \langle\bF,\nabla\bphi_n\rangle_\ltwo{\upper} = \langle\bF,\nabla\bphi\rangle_\ltwo{\upper}.
\end{split}
\]
On the other hand, for each $n \in \mathbb{N}$, it follows from H\"{o}lder's inequality, \eqref{uv-finite-linfty}, and \eqref{uv-finite-chi} that
\begin{align*}
 &   \left|\langle \bu\cdot(\nabla\bphi_n-\nabla\bphi), \bu\rangle_\ltwo{\upper}\right| \\
    &\leq\|\nabla(\bphi_n-\bphi)\|_\ltwo{\upper}\| \bu\otimes \bu\|_\ltwo{\upper}\\
        &\leq N\|\nabla(\bphi_n-\bphi)\|_\ltwo{\upper}.
\end{align*}
Then, from \eqref{eqn:3-20}, we infer that
\[
\lim_{n\rightarrow \infty} \langle \bu \cdot \nabla\bphi_n, \bu\rangle_\ltwo{\upper}  = \langle \bu \cdot \nabla\bphi, \bu\rangle_\ltwo{\upper}.
\]
Hence, by taking the limit as $n\rightarrow \infty$ on both sides of \eqref{5.3-1}, we obtain the desired result.   The proof of the lemma is completed.
\end{proof}
We are now ready to prove Theorem \ref{theorem2.2}.
\begin{proof}[\textbf{Proof of  Theorem \ref{theorem2.2}}] We consider the assumption (i). Let us denote $\omega = \omega_i$ with some fixed $i =0,1,2$ so that $\bar{\bu} \in L^\infty(\R^2_+, \omega^{1/2})$ and
\begin{equation} \label{2.2-bis}
    \|\bar{\bu}\|_{L^\infty(\R^2_+, \omega^{1/2})}< 1/4.
\end{equation}
As $\bar{\bu} \in L^\infty(\R^2_+, \omega^{1/2})$, we apply (ii) of Lemma \ref{lemma5.4} to infer that
\begin{equation} \label{0321-1}
    \langle\nabla\bu,\nabla \bar{\bu}\rangle_\ltwo{\upper}+\langle\bu\cdot\nabla\bu,\bar{\bu}\rangle_\ltwo{\upper}=\langle\bF,\nabla\bar\bu\rangle_\ltwo{\upper}.
\end{equation}
Similarly, we apply Lemma \ref{lemma5.3} to get
\begin{equation} \label{0321-2}
    \langle\nabla\bar{\bu},\nabla\bu\rangle_\ltwo{\upper}-\langle\bar{\bu} \cdot\nabla\bu,\bar{\bu}\rangle_\ltwo{\upper}=\langle\bF,\nabla\bu\rangle_\ltwo{\upper}.
\end{equation}
Next, let us denote $\bw =\bu-\bar{\bu}$.  By a simple algebra, we have
\[
\begin{split}
\|\nabla\bw\|^2_\ltwo{\upper}  & =\|\nabla\bu\|^2_\ltwo{\upper}+\|\nabla\bar{\bu}\|^2_\ltwo{\upper} \\
& \qquad -\langle\nabla\bu,\nabla\bar{\bu}\rangle_\ltwo{\upper}  -\langle\nabla\bar{\bu},\nabla\bu\rangle_\ltwo{\upper}.
\end{split}
\]
From this, \eqref{0321-1}, and \eqref{0321-2} 
it follows that
\begin{align*}
    \|\nabla\bw\|^2_\ltwo{\upper} 
    &=\|\nabla\bu\|^2_\ltwo{\upper}+\|\nabla\bar{\bu}\|^2_\ltwo{\upper}+\langle\bu\cdot\nabla\bu,\bar{\bu}\rangle_\ltwo{\upper}\\
    & \qquad -\langle\bF,\nabla\bar\bu\rangle_\ltwo{\upper}-\langle\bar\bu\cdot\nabla\bu,\bar{\bu}\rangle_\ltwo{\upper}-\langle\bF,\nabla\bu\rangle_\ltwo{\upper}\\
    &=\|\nabla\bu\|^2_\ltwo{\upper}+\|\nabla\bar\bu\|^2_\ltwo{\upper}-\langle\bF,\nabla\bar\bu\rangle_\ltwo{\upper}
    \\
    & \qquad -\langle\bF,\nabla\bu\rangle_\ltwo{\upper} +\langle\bw\cdot\nabla\bu,\bar{\bu}\rangle_\ltwo{\upper},
\end{align*}
which together with \eqref{2.1} imply that
\[
 \|\nabla\bw\|^2_\ltwo{\upper} \leq\langle\bw\cdot\nabla\bu,\bar{\bu}\rangle_\ltwo{\upper}.
\]
Also, due to (i) of Lemma \ref{lemma5.4}, we see that $\langle\bw\cdot\nabla\bar{\bu},\bar{\bu}\rangle_\ltwo{\upper} =0$, and therefore
\begin{align*}
    \|\nabla\bw\|^2_\ltwo{\upper} & \leq \langle\bw\cdot\nabla\bu,\bar{\bu}\rangle_\ltwo{\upper}-\langle\bw\cdot\nabla\bar{\bu},\bar{\bu}\rangle_\ltwo{\upper}\\&=\langle\bw\cdot\nabla\bw,\bar{\bu}\rangle_\ltwo{\upper} \leq\|\nabla\bw\|_\ltwo{\upper}\|\bw\bar{\bu}\|_\ltwo{\upper}.
\end{align*}
Finally, we infer from this last estimate, Lemma \ref{Hardy}, and \eqref{2.2-bis} that
\begin{align*} 
 \|\nabla\bw\|_\ltwo{\upper}& \leq\|\bw\bar{\bu}\|_\ltwo{\upper}\leq\|\bw\|_{\ltwo{\upper, \omega^{-1}}}\|\bar{\bu}\|_{\linf{\upper, \omega^{1/2}}} \\
 & \leq 4\|\nabla\bw\|_\ltwo{\upper} \|\bar{\bu}\|_{\linf{\upper, \omega^{1/2}}} \\
 & < \|\nabla\bw\|_\ltwo{\upper}.
\end{align*}
This implies $\|\nabla\bw\|_\ltwo{\upper}=0$. From this, and the boundary condition $\bw|_{\partial\upper}=0$, we get $\bw\equiv0$ in $\upper$. Therefore, $\bu = \bar{\bu}$.
\smallskip

 Now we consider the assumption (ii) of Theorem \ref{theorem2.2}. We have by a similar argument as in the previous case that
\[
 \|\nabla\bw\|_\ltwo{\upper} \leq\|\bw\bar{\bu}\|_\ltwo{\upper}.
\]
Then, using \eqref{3.3-2} from Lemma \ref{Hardy-2} for $\bw$, we obtain
\[
\begin{split}
\|\bw\bar{\bu}\|_\ltwo{\upper} &= \left(\int_0^\infty \int_{\R}|\bw(\bx)|^2|\bar{\bu}(\bx)|^2 dx_1 dx_2\right)^{1/2} \\
& \leq \left(\int_0^\infty\Big[ \|\bar{\bu}(\cdot, x_2)\|_{L^\infty(\R)}^2 \int_{\R} | \bw(\bx)|^2 dx_1\Big] dx_2 \right)^{1/2} \\
& \leq \|\nabla \bw\|_{L^2(\R^2_+)} \left(\int_0^\infty x_2\|\bar{\bu}(\cdot, x_2)\|_{L^\infty(\R)}^2 dx_2 \right)^{1/2}.
\end{split}
\]
Then, by the assumption
\[
\left (\int_0^\infty x_2\|\bar{\bu}(\cdot, x_2)\|_{L^\infty(\R)}^2 dx_2 \right)^{1/2}  < 1,
\]
we obtain
\[
\|\nabla\bw\|_\ltwo{\upper}  < \|\nabla\bw\|_\ltwo{\upper}, 
\]
which implies $\bw =0$ and then $\bu = \bar{\bu}$. The proof is completed.
\end{proof}
\section*{Acknowledgement}
This work was done partially when Adrian Calderon was an undergraduate student at the University of Tennessee. Adrian Calderon would like to thank the University of Tennessee for the generous supports.

\end{document}